\documentclass[10pt]{article}

\usepackage{amssymb,amsthm,amsmath,amsfonts}
\usepackage{graphicx}
\usepackage[usenames]{color}

\newtheorem{thm}{Theorem}[section]

\newtheorem{lem}{Lemma}[section]

\newtheorem{rmk}{Remark}[section]
\newtheorem{exam}{Example}[section]
\setlength\topmargin{-1cm} \setlength\textheight{220mm}
\setlength\oddsidemargin{0mm}
\setlength\evensidemargin\oddsidemargin \setlength\textwidth{160mm}
\setlength\baselineskip{18pt}

\title{A New Approach to Nonlinear Constrained Tikhonov Regularization}
\date{\today}
\author{Kazufumi Ito\footnote{Department of Mathematics and Center for
Research in Scientific Computation, North Carolina State University,
Raleigh, North Carolina 27695, USA (kito@unity.ncsu.edu). } \and Bangti
Jin\footnote{Department of Mathematics and Institute for Applied Mathematics and Computational Science, Texas
A\&M University, College Station, Texas 77843-3368, USA (btjin@math.tamu.edu).}}
\begin{document}
\maketitle

\begin{abstract}
We present a novel approach to nonlinear constrained Tikhonov regularization from the
viewpoint of optimization theory. A second-order sufficient optimality condition is
suggested as a nonlinearity condition to handle the nonlinearity of the forward operator.
The approach is exploited to derive convergence rates results for a priori as well as a
posteriori choice rules, e.g., discrepancy principle and balancing principle, for
selecting the regularization parameter. The idea is further illustrated on a general
class of parameter identification problems, for which (new) source and nonlinearity
conditions are derived and the structural property of the nonlinearity term is revealed.
A number of examples including identifying distributed parameters in elliptic
differential equations are presented.

\noindent Keywords: second-order sufficient condition, nonlinearity condition, source
condition, Tikhonov regularization, nonlinear inverse problem, parameter identification
\end{abstract}

\section{Introduction}
In this paper, we discuss a robust method for solving ill-posed nonlinear operator
equations
\begin{equation}\label{eqn:nonlineq}
K(u)=g^\delta,
\end{equation}
where $g^\delta\in H$ denotes the noisy data, with its accuracy relative to the exact
data $g^\dagger=K(u^\dagger)$ ($u^\dagger\in X$ is the exact solution) measured by the
noise level $\delta=\|g^\dagger-g^\delta\|$. Here the nonlinear operator $K:X\rightarrow
H$ is Fr\'{e}chet differentiable, and the spaces $X$ and $H$ are Hilbert spaces.

In practice, the unknown coefficient $u$ may be subjected to pointwise constraint, e.g.,
$u\geq c$ almost everywhere. This is especially true for distributed coefficient
estimation in differential equations to ensure the well-definedness of the operator $K$,
see, e.g., \cite{BanksKunisch:1989,Isakov:2006,Chavent:2009} for relevant examples. We
denote the constraint set by $\mathcal{C}\subset X$, and assume that it is closed and
convex and $u^\dagger \in\mathcal{C}$.

To obtain an accurate yet stable approximation, we employ the now classical approach of
minimizing the following Tikhonov functional
\begin{equation*}
J_\eta(u)=\tfrac{1}{2}\|K(u)-g^\delta\|^2+ \tfrac{\eta}{2}\,\|u\|^2,
\end{equation*}
where the two terms are the fidelity incorporating the information in the data $g^\delta$
and a regularization for stabilizing the problem, respectively. A faithful choice of the
fidelity depends on the statistics of noises corrupting the data
\cite{ClasonJinKunisch:2010}. The penalty is chosen to reflect a priori knowledge (such
as smoothness) and constraint on the expected solutions, and nonsmooth penalties may also
be adopted.

Consequently, we arrive at the following constrained optimization problem:
\begin{equation}\label{prob:constr}
\min_{u\in\mathcal{C}}\left\{J_\eta(u)\equiv\tfrac{1}{2}\|K(u)-g^\delta\|^2+\tfrac{\eta}{2}\,\|u\|^2\right\},
\end{equation}
given a regularization parameter $\eta>0$. The minimizer of \eqref{prob:constr} is
denoted by $u_\eta^\delta$, and respectively, the minimizer for the exact data
$g^\dagger$ by $u_\eta$. In practice, it is important to develop rules for determining
the scalar parameter $\eta>0$ automatically so as to obtain robust yet accurate
approximations $u^\delta_\eta$ to the exact solution $u^\dagger$. We will analyze the
discrepancy principle \cite{Morozov:1966}, which uses a precise knowledge of the noise
level $\delta$ and determines $\eta$ by $\|K(u^\delta_\eta)-g^\delta\|\sim \delta$, and
two heuristic rules (balancing principle and Hanke-Raus rule), which do not require a
knowledge of the noise level and are purely data-driven.

Since the pioneering works \cite{EnglKunischNeubauer:1989,SeidmanVogel:1989}, nonlinear
Tikhonov regularization (in the absence of constraints) has been intensively studied
\cite[Chap. 10]{EnglHankNeubauer:1996}. Various existence, stability and consistency
results were established, and diverse practical applications have been successfully
demonstrated. Also convergence rates results were derived for several choice rules, e.g.,
discrepancy principle and monotone error rule
\cite{ChaventKunisch:1994,TautenhahnJin:2003}. The essential ingredients of convergence
rates analysis are the source and nonlinearity conditions.

To derive a convergence rates result, extra conditions on the exact solution $u^\dagger$
are necessary \cite{EnglHankNeubauer:1996}. They are collectively known as source
conditions, and often impose certain smoothness assumptions on $u^\dagger$. In the
absence of constraints, it is usually expressed via range inclusion, e.g.,
$u^\dagger=K'(u^\dagger)^\ast w$ for some representer $w\in H$, or variational
inequalities. In this work, we shall use an appropriate source condition in the presence
of convex constraints, derived from the viewpoint of optimization theory as in
\cite{ChaventKunisch:1994}. We refer to \cite{Neubauer:1988,LorenzRosch:2010} for related
results on constrained linear inverse problems.

One also needs conditions on the operator $K$ to control its degree of nonlinearity. One
classical condition is that the derivative $K'(u)$ of the operator $K$ is Lipschitz
continuous with its Lipschitz constant $L$ satisfying $L\|w\|<1$
\cite{EnglKunischNeubauer:1989}. We shall propose a second-order sufficient condition on
the solution $u^\dagger$ as an alternative. It is much weaker than the classical one, yet
sufficient for analyzing the Tikhonov functional \eqref{prob:constr}, i.e., establishing
convergence rates results. The idea is further explored on a broad class of nonlinear
parameter identifications, by exploiting explicit structures of the adjoint operator
$K'(u^\dagger)^\ast$ and by revealing the structure of the crucial nonlinearity term
$\langle w, E(u,u^\dagger)\rangle$ for bilinear problems.

The rest of the paper is structured as follows. In Section \ref{sec:sosc}, we motivate
the source condition \eqref{scon} and the nonlinearity condition \eqref{ncon} using
optimization theory. In Section \ref{sec:err} we establish an a priori convergence rate
of the discrepancy principle \cite{Morozov:1966} and a posteriori convergence rates for
two heuristic rules (balancing principle \cite{ItoJinZou:2010,ItoJinTakeuchi:2010} and
Hanke-Raus rule \cite{HankeRaus:1996,EnglHankNeubauer:1996}). In Section \ref{sec:class}
we illustrate the approach on a general class of nonlinear parameter identification
problems. Here we shall derive new source and nonlinearity conditions, and reveal the
structural property of the nonlinearity term. Finally, the abstract theories are worked
out in Section \ref{sec:exam} for several concrete examples. An example is given, for
which the smallness assumption ($L\|w\|<1$) in the classical nonlinearity condition is
violated, whereas the proposed nonlinearity condition \eqref{ncon} always holds. Also,
detailed derivations are presented for three representative inverse coefficient problems.
Throughout the paper, we shall use the symbol $\langle\cdot,\cdot\rangle$ to denote both
inner products in Hilbert spaces and duality pairing, and the notation $c$ to denote a
generic constant which may change at different occurrences but does not depend on the
quantities of interest.

\section{Second-order sufficient condition}\label{sec:sosc}
In this section, we develop the new approach from the viewpoint of optimization theory.
We begin with a second-order necessary condition for the minimizer $u_\eta$. Then we
propose using a second-order sufficient condition as a nonlinearity condition, show its
connection with classical conditions, and establish its role in deriving basic error
estimates.

\subsection{Necessary optimality system}

Consider the following generic constrained Tikhonov regularization formulation
\begin{equation*}
   \min_{u\in\mathcal{C}}\phi(u,g^\dagger)+\eta\psi(u),
\end{equation*}
where the fidelity $\phi(u,g)$ is differentiable in the first argument, the penalty
$\psi(u)$ is convex and (weakly) lower semi-continuous, and the constraint set
$\mathcal{C}$ is convex and closed. Our derivation of the necessary optimality condition
follows from \cite{ItoKunisch:2008}.

Let $u_\eta$ be a minimizer of the problem, i.e.,
\begin{equation*}
\phi(u_\eta,g^\dagger)+\eta\psi(u_\eta)\leq
\phi(v,g^\dagger)+\eta\psi(v)\quad \forall v\in\mathcal{C}.
\end{equation*}
Since $\psi$ is convex, for $v=u_\eta+t\,(u-u_\eta) \in \mathcal{C}$ with $u\in
\mathcal{C}, 0<t\le1$
\begin{equation*}
\frac{\phi(v,g^\dagger)-\phi(u_\eta,g^\dagger)}{t}
\ge -\eta\,\frac{\psi(v)-\psi(u_\eta)}{t}\ge
-\eta\,(\psi(u)-\psi(u_\eta)).
\end{equation*}
By letting $t\to0^+$, we obtain the necessary optimality condition
\begin{equation} \label{opt}
\langle\phi'(u_\eta,g^\delta),u-u_\eta\rangle+
\eta(\psi(u)-\psi(u_\eta))\geq 0\quad \forall u\in\mathcal{C}.
\end{equation}
Now let  $\partial\psi(u)$ denote the subdifferential of the convex functional $\psi$ at
$u$, i.e.,
\begin{equation*}
     \partial\psi(u) = \{\xi\in X^\ast: \psi(\tilde u)\geq \psi(u)
     +\langle\xi,\tilde u-u\rangle\;\;\forall\tilde{u}\in X\}.
\end{equation*}
Consequently, it follows from \eqref{opt} and the convexity of $\psi$ that if there
exists an element $\xi_\eta \in
\partial\psi(u_\eta)$ and let
\begin{equation*}
   \mu_\eta=\frac{1}{\eta}\left(\phi'(u_\eta,g^\dagger)+\eta\xi_\eta\right),
\end{equation*}
then we have
\begin{equation} \label{var1}
\left\{\begin{array}{l}
\phi'(u_\eta,g^\dagger) + \eta \xi_\eta-\eta\mu_\eta=0,\\
\langle\mu_\eta,u-u_\eta\rangle\geq 0\ \ \forall u\in\mathcal{C},\\
\xi_\eta\in\partial\psi(u_\eta).
\end{array}\right.
\end{equation}
Thus, $\mu_\eta \in X^*$ serves as a Lagrange multiplier for the constraint
$\mathcal{C}$, cf. \cite[Thm. 3.2]{MaurerZowe:1979}. If $\psi^\prime(u_\eta) \in X^*$
exists, then $\xi_\eta =\psi^\prime(u_\eta)$ and thus
$\mu_\eta=\frac{1}{\eta}\phi^\prime(u_\eta,g^\dagger)+\psi^\prime(u_\eta)$. In a more
general constrained optimization, the existence of $\mu_\eta \in X^*$ is guaranteed by
the regular point condition \cite{MaurerZowe:1979,ItoKunisch:2008}. The inequality
\eqref{var1} is the first-order optimality condition. We refer to
\cite{MaurerZowe:1979,ItoKunisch:2008} for a general theory of second-order conditions
(see also Lemma 2.1).

\subsection{Source and nonlinearity conditions}

Henceforth we focus on problem \eqref{prob:constr}. We will propose a new nonlinearity
condition based on a second-order sufficient optimality condition. To this end, we first
introduce the second-order error $E(u,\tilde{u})$ of the operator $K$
\cite{ItoKunisch:2008} defined by
\begin{equation*}
E(u,\tilde{u})=K(u)-K(\tilde{u})-K'(\tilde{u})(u-\tilde{u}).
\end{equation*}
which quantitatively measures the degree of nonlinearity, or pointwise linearization
error, of the operator $K$, and will be used in deriving our nonlinearity condition. We
also recall the first-order necessary optimality condition for $u_\eta$ (cf.,
\eqref{var1})
\begin{equation}\label{eqn:1stopteta}
  \left\{\begin{array}{l} K'(u_\eta)^\ast(K(u_\eta)-g^\dagger)+\eta
    u_\eta-\eta\mu_\eta=0,\\
   \langle\mu_\eta,u-u_\eta\rangle\geq0 \,\,\,\forall u\in\cal{C}, 
  \end{array}\right.
\end{equation}
where $\mu_\eta$ is a Lagrange multiplier for the constraint $\mathcal{C}$. In view of
the differentiability of the penalty, the Lagrange multiplier $\mu_\eta$ is explicitly
given by $u_\eta+\frac{1}{\eta}K'(u_\eta)^* (K(u_\eta)-g^\dagger)$.

Now we can derive a second-order necessary optimality condition for problem
\eqref{prob:constr}.

\begin{lem}\label{lem:2ndneccond}
The necessary optimality condition of a minimizer $u_\eta$ to the Tikhonov functional
$J_\eta$ with the exact data $g^\dagger$ is given by: for any $u\in\mathcal{C}$
\begin{equation}\label{eqn:2ndneccond}
\tfrac{1}{2}\|K(u_\eta)-K(u)\|^2+\tfrac{\eta}{2}\|u_\eta-u\|^2+\langle
K(u_\eta)-g^\dagger,E(u,u_\eta)\rangle+\eta\langle\mu_\eta,u-u_\eta\rangle\geq0,
\end{equation}
where $\mu_\eta$ is a Lagrange multiplier associated with the constraint $\mathcal{C}$.
\end{lem}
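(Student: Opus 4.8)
The plan is to start from the first-order necessary optimality condition \eqref{eqn:1stopteta}, which holds for all $u\in\mathcal{C}$, and exploit the fact that $J_\eta$ is the sum of a quadratic term $\tfrac{\eta}{2}\|u\|^2$ and a term $\tfrac12\|K(u)-g^\dagger\|^2$ whose nonquadratic part is captured exactly by $E(u,u_\eta)$. Concretely, since $u_\eta$ minimizes $J_\eta$ over $\mathcal{C}$, for every $u\in\mathcal{C}$ we have $J_\eta(u)-J_\eta(u_\eta)\ge 0$. I would expand this difference directly rather than passing to a variational inequality: write
\[
J_\eta(u)-J_\eta(u_\eta)=\tfrac12\|K(u)-g^\dagger\|^2-\tfrac12\|K(u_\eta)-g^\dagger\|^2+\tfrac{\eta}{2}\|u\|^2-\tfrac{\eta}{2}\|u_\eta\|^2.
\]
For the quadratic penalty, the identity $\tfrac12\|u\|^2-\tfrac12\|u_\eta\|^2=\langle u_\eta,u-u_\eta\rangle+\tfrac12\|u-u_\eta\|^2$ is exact. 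For the fidelity, using $K(u)=K(u_\eta)+K'(u_\eta)(u-u_\eta)+E(u,u_\eta)$ and the polarization identity, one gets
\[
\tfrac12\|K(u)-g^\dagger\|^2-\tfrac12\|K(u_\eta)-g^\dagger\|^2=\langle K(u_\eta)-g^\dagger,\,K'(u_\eta)(u-u_\eta)\rangle+\langle K(u_\eta)-g^\dagger,\,E(u,u_\eta)\rangle+\tfrac12\|K(u)-K(u_\eta)\|^2.
\]

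Next I would collect the two linear-in-$(u-u_\eta)$ terms and simplify them using \eqref{eqn:1stopteta}. Adding $\langle K'(u_\eta)^\ast(K(u_\eta)-g^\dagger),u-u_\eta\rangle+\eta\langle u_\eta,u-u_\eta\rangle$ and invoking the first equation of \eqref{eqn:1stopteta}, namely $K'(u_\eta)^\ast(K(u_\eta)-g^\dagger)+\eta u_\eta=\eta\mu_\eta$, shows that the combined linear term equals $\eta\langle\mu_\eta,u-u_\eta\rangle$. Substituting back, the inequality $J_\eta(u)-J_\eta(u_\eta)\ge0$ becomes exactly
\[
\tfrac12\|K(u)-K(u_\eta)\|^2+\tfrac{\eta}{2}\|u-u_\eta\|^2+\langle K(u_\eta)-g^\dagger,E(u,u_\eta)\rangle+\eta\langle\mu_\eta,u-u_\eta\rangle\ge0,
\]
which is \eqref{eqn:2ndneccond} (noting $\|K(u)-K(u_\eta)\|=\|K(u_\eta)-K(u)\|$).

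The computation is essentially bookkeeping, so I do not anticipate a serious obstacle; the one point requiring care is the sign and placement of $\mu_\eta$, i.e., making sure that the term $\langle K'(u_\eta)^\ast(K(u_\eta)-g^\dagger)+\eta u_\eta,\,u-u_\eta\rangle$ is added (not subtracted) and that it is precisely this combination — and not a variational-inequality version with an inequality — that is substituted, so that the resulting statement retains $\eta\langle\mu_\eta,u-u_\eta\rangle$ as an explicit term rather than being discarded via $\langle\mu_\eta,u-u_\eta\rangle\ge0$. Keeping that term explicit is in fact the whole point of the lemma, since later it gets paired with the source condition on $u^\dagger$. I would also remark that the derivation uses only that the penalty $\tfrac{\eta}{2}\|u\|^2$ is differentiable (so that $\mu_\eta$ has the closed form stated just before the lemma) and the exactness of the two algebraic identities above; no smallness or Lipschitz assumption on $K'$ enters here.
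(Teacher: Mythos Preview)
Your proof is correct and follows essentially the same approach as the paper's: both start from the minimizing inequality $J_\eta(u)\ge J_\eta(u_\eta)$, expand the two squared-norm differences via the same polarization identities, and then use the first-order condition \eqref{eqn:1stopteta} to replace the linear term $\langle K'(u_\eta)^\ast(K(u_\eta)-g^\dagger)+\eta u_\eta,\,u-u_\eta\rangle$ by $\eta\langle\mu_\eta,u-u_\eta\rangle$. The only cosmetic difference is that the paper first writes the fidelity difference in terms of $\langle K(u_\eta)-g^\dagger,K(u)-K(u_\eta)\rangle$ and introduces $E(u,u_\eta)$ at the very end, whereas you split $K(u)-K(u_\eta)$ into $K'(u_\eta)(u-u_\eta)+E(u,u_\eta)$ one step earlier.
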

\begin{proof}
By the minimizing property of $u_\eta$, we have that for any $u\in\mathcal{C}$
\begin{equation*}
\tfrac{1}{2}\|K(u_\eta)-g^\dagger\|^2+\tfrac{\eta}{2}\|u_\eta\|^2
\leq\tfrac{1}{2}\|K(u)-g^\delta\|^2+\tfrac{\eta}{2}\|u\|^2.
\end{equation*}
Straightforward computations show the following two elementary identities
\begin{equation*}
\begin{aligned}
  \tfrac{1}{2}\|u_\eta\|^2-\tfrac{1}{2}\|u\|^2&=-\tfrac{1}{2}\|u_\eta-u\|^2-\langle
  u_\eta,u-u_\eta\rangle,\\
  \tfrac{1}{2}\|K(u_\eta)-g^\dagger\|^2-\tfrac{1}{2}\|K(u)-g^\dagger\|^2&=
  -\tfrac{1}{2}\|K(u_\eta)-K(u)\|^2-\langle
  K(u_\eta)-g^\dagger,K(u)-K(u_\eta)\rangle.
\end{aligned}
\end{equation*}
Upon substituting these two identities, we arrive at
\begin{equation}\label{eqn:nc1}
-\tfrac{1}{2}\|K(u_\eta)-K(u)\|^2-\tfrac{\eta}{2}\|u_\eta-u\|^2-\eta\langle
u_\eta,u-u_\eta\rangle-\langle
K(u_\eta)-g^\dagger,K(u)-K(u_\eta)\rangle\leq 0.
\end{equation}
Now, the optimality condition for the minimizer $u_\eta$ (cf. \eqref{eqn:1stopteta}) is
given by
\begin{equation*}
K'(u_\eta)^\ast(K(u_\eta)-g^\dagger)+\eta u_\eta-\eta\mu_\eta=0,
\end{equation*}
where $\mu_\eta$ is a Lagrange multiplier for the constraint $\mathcal{C}$. Consequently,
\begin{equation*}
\eta\langle u_\eta,u-u_\eta\rangle+\langle
K(u_\eta)-g^\dagger,K'(u_\eta)(u-u_\eta)\rangle-
\eta\langle\mu_\eta,u-u_\eta\rangle=0.
\end{equation*}
Assisted with this identity and the second-order error $E(u,u_\eta)$, inequality
\eqref{eqn:nc1} yields immediately the desired assertion.
\end{proof}

One salient feature of the optimality condition \eqref{eqn:2ndneccond} is that it is true
for any $u\in \mathcal{C}$ and thus it is a global one. Also, the term
$\langle\mu_\eta,u-u_\eta\rangle$ is always nonnegative. The necessary condition
\eqref{eqn:2ndneccond} may be strengthened as follows: there exist some $c_s\in [0,1)$
and $\epsilon'>0$ such that
\begin{equation}\label{eqn:suffccond}
  \begin{aligned}
     \tfrac{1}{2}\|K(u_\eta)-K(u)\|^2+\tfrac{\eta}{2}\|u_\eta-u\|^2+\langle&
     K(u_\eta)-g^\dagger,E(u,u_\eta)\rangle+\eta\langle\mu_\eta,u-u_\eta\rangle\\
     &\geq\tfrac{c_s}{2}\|K(u_\eta)-K(u)\|^2+\tfrac{\epsilon'\eta}{2}\|u-u_\eta\|^2 \quad\forall u\in\mathcal{C}.
  \end{aligned}
\end{equation}
That is, the left hand side of \eqref{eqn:suffccond} is coercive in the sense that it is
bounded below by the positive term $\tfrac{c_s}{2}\|K(u_\eta)-K(u)\|^2
+\tfrac{\epsilon'\eta}{2}\|u-u_\eta\|^2$. This condition is analogous to, but not
identical with, the positive definiteness requirement on the Hessian in classical
second-order conditions in optimization theory \cite{MaurerZowe:1979,ItoKunisch:2008}.
Nonetheless, we shall call condition \eqref{eqn:2ndneccond}/\eqref{eqn:suffccond} a
second-order necessary/sufficient optimality condition.

\begin{rmk}\label{rmk:nonsmooth}
The case of a general convex $\psi$ can be handled similarly using Bregman distance,
which is defined by $d_\xi(u,\tilde{u})=\psi(u)- \psi(\tilde{u})-\langle\xi,u-
\tilde{u}\rangle$ for any $\xi\in\partial\psi(\tilde{u})$. Then repeating the proof in
Lemma \ref{lem:2ndneccond} gives the following necessary optimality condition
$(\xi_\eta\in\partial \psi(u_\eta))$
\begin{equation*}
\tfrac{1}{2}\|K(u_\eta)-K(u)\|^2+\eta d_{\xi_\eta}(u,u_\eta)+\langle
K(u_\eta)-g^\dagger,E(u,u_\eta)\rangle+\eta\langle\mu_\eta,u-u_\eta\rangle\geq0.
\end{equation*}
All subsequent developments can be adapted to general penalty $\psi$ by replacing
$\tfrac{1}{2}\|u-\tilde{u}\|^2$ with $d_\xi(u,\tilde{u})$. We refer interested readers to
\cite{BoneskyKazimierskiMaass:2008} and references therein for properties of Bregman
distance.
\end{rmk}

Note that there always holds $u_\eta\rightarrow u^\dagger$ subsequentially as
$\eta\rightarrow0^\dagger$ \cite{EnglKunischNeubauer:1989}. Assume that
$\tfrac{g^\dagger-K(u_\eta)}{\eta}\rightarrow w$ weakly and $\mu_\eta\rightarrow
\mu^\dagger$ weakly in suitable spaces as $\eta\rightarrow0^+$. Then by taking limit in
equation \eqref{eqn:1stopteta} as $\eta\rightarrow0^+$, we arrive at the following source
condition.\\\\
\textbf{Condition} There exists a $w\in H$ and $\mu^\dagger\in X^*$ such that the exact
solution $u^\dagger$ satisfies
\begin{equation} \label{scon}
\left\{\begin{array}{c}-K'(u^\dagger)^\ast
w+u^\dagger-\mu^\dagger=0,\\
\langle\mu^\dagger,u-u^\dagger\rangle\geq 0\,\quad \forall u\in\mathcal{C}.
\end{array}\right.
\end{equation}

The source condition \eqref{scon} is equivalent to assuming the existence of a Lagrange
multiplier $w$ (for the equality constraint $K(u)=g^\dagger$) for the minimum-norm
problem
\begin{equation*}
  \min\quad \|u\| \quad\mbox{subject to  } K(u)=g^\dagger \mbox{
  and } u\in \mathcal{C},
\end{equation*}
and, hence, the source condition \eqref{scon} represents a necessary optimality condition
for the minimum-norm solution $u^\dagger$. We note that, in case of a linear operator
$K$, there necessarily holds the relation: $u^\dagger\in\overline{\mathrm{R}(K^\ast)}$,
the closure of the range space $\mathrm{R}(K^\ast)$. The source condition is stronger
since for ill-posed problems generally $ \overline{\mathrm{R}(K^\ast)}\neq
\mathrm{R}(K^\ast)$.

Now we can introduce our nonlinearity condition based on a second-order sufficient
condition.\\\\
\textbf{Condition} There exists some $\epsilon>0$ and $c_r\ge 0$ such that the exact
solution $u^\dagger$ satisfies
\begin{equation} \label{ncon}
\tfrac{c_r}{2}\|K(u)-K(u^\dagger)\|^2+
\tfrac{1}{2}\|u-u^\dagger\|^2-\langle w, E(u,u^\dagger)\rangle +
\langle \mu^\dagger,u-u^\dagger\rangle
\geq\tfrac{\epsilon}{2}\|u-u^\dagger\|^2\quad \forall
u\in\mathcal{C}.
\end{equation}

Here the elements $w$ and $\mu^\dagger$ are from the source condition \eqref{scon}. The
nonlinearity term $\langle w,E(u,u^\dagger)\rangle$ is motivated by the following
observation. If the source representer  $w$ does satisfy
$w=\lim_{\eta\rightarrow0}\frac{g^\dagger-K(u_\eta)}{\eta}$ (weakly), then
asymptotically, we may replace $K(u_\eta)-g^\dagger$ in \eqref{eqn:suffccond} with $-\eta
w$, divide \eqref{eqn:suffccond} by $\eta$ and take $\eta\to 0$ to obtain \eqref{ncon},
upon assuming the convergence of $\frac{1-c_s}{\eta}$ to a finite constant $c_r$. We
would like to point out that the constant $c_r$ may be made very large to accommodate the
nonlinearity of the operator $K$. The only possibly indefinite term is $\langle w,
E(u,u^\dagger)\rangle$. Hence, the analysis of $\langle w, E(u,u^\dagger)\rangle$ is key
to demonstrating the nonlinearity condition \eqref{ncon} for concrete operator equations.

\begin{rmk} On the nonlinearity condition \eqref{ncon}, we have the following two
remarks.
\begin{itemize}
\item[$(1)$] In case of constrained Tikhonov regularization, we may have $w=0$, which
    results in $\langle w,E(u,u^\dagger)\rangle=0$ and thus the nonlinearity
    condition \eqref{ncon} automatically holds. For example, if $\mathcal{C}=\{u:
    u\ge c\}$, with $c$ being a positive constant, and $u^\dagger=c$ is the exact
    solution $({i.e.},\, g^\dagger=K(u^\dagger))$, then $w=0$ and
    $\mu^\dagger=u^\dagger$ satisfy the source condition \eqref{scon}. Moreover, if
    the set $\{\mu^\dagger\neq0\}$ has a positive measure, then the term $\langle
    \mu^\dagger,u-u^\dagger\rangle$ provides a strictly positive contribution to
    \eqref{ncon}. These are possible beneficial consequences due to the presence of
    constraints.
\item[$(2)$] A classical nonlinearity condition \cite{EnglKunischNeubauer:1989} reads
    \begin{equation}\label{cond:clsnon}
       K'(u) \mbox{ is Lipschitz continuous
       with a Lipschitz constant } L \mbox{ satisfying } L\|w\|<1.
    \end{equation}
    There are several other nonlinearity conditions. A very similar condition
    \cite{HeinHofmann:2009} is given by
    $\|E(u,\tilde{u})\|\leq\frac{L}{2}\|u-\tilde{u}\|^2$ with $L\|w\|<1$, which
    clearly implies condition \eqref{ncon}. Another popular condition \cite[pp. 6,
    eq. (2.7)]{KaltenbacherNeubauerScherzer:2008} reads
    \begin{equation}\label{cond:clsnon2}
        \|E(u,\tilde{u})\|\leq c_E\|K(u)-K(\tilde{u})\|\|u-\tilde{u}\|.
    \end{equation}
    It has been used for analyzing iterative regularization methods. Clearly, it
    implies \eqref{ncon} for $c_r>(c_E\|w\|)^2$. We note that it implies
    \eqref{eqn:suffccond} after applying Young's inequality.
\end{itemize}
\end{rmk}

The following lemma shows that the proposed nonlinearity condition \eqref{ncon} is much
weaker than the classical one, cf. \eqref{cond:clsnon}. Similarly one can show this for
condition \eqref{cond:clsnon2}. Therefore, the proposed approach does cover the classical
results.
\begin{lem}
Condition \eqref{cond:clsnon} implies condition \eqref{ncon}.
\end{lem}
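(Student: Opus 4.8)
The plan is to bound the only indefinite term in \eqref{ncon}, namely $-\langle w, E(u,u^\dagger)\rangle$, using the Lipschitz continuity of $K'$ assumed in \eqref{cond:clsnon}. First I would recall the integral (mean-value) representation of the second-order error,
\begin{equation*}
E(u,u^\dagger)=K(u)-K(u^\dagger)-K'(u^\dagger)(u-u^\dagger)=\int_0^1\bigl(K'(u^\dagger+t(u-u^\dagger))-K'(u^\dagger)\bigr)(u-u^\dagger)\,dt,
\end{equation*}
which is valid since $K$ is Fr\'echet differentiable and the segment $[u^\dagger,u]$ lies in $\mathcal{C}$ by convexity. Applying the Lipschitz bound $\|K'(v)-K'(u^\dagger)\|\le L\|v-u^\dagger\|$ inside the integral with $v=u^\dagger+t(u-u^\dagger)$ gives the pointwise estimate $\|E(u,u^\dagger)\|\le\tfrac{L}{2}\|u-u^\dagger\|^2$.

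Next I would use Cauchy--Schwarz to get $-\langle w,E(u,u^\dagger)\rangle\ge-\|w\|\,\|E(u,u^\dagger)\|\ge-\tfrac{L\|w\|}{2}\|u-u^\dagger\|^2$, and simply discard the two manifestly nonnegative terms $\tfrac{c_r}{2}\|K(u)-K(u^\dagger)\|^2$ and $\langle\mu^\dagger,u-u^\dagger\rangle$ (the latter is $\ge0$ for all $u\in\mathcal{C}$ by the second line of the source condition \eqref{scon}). This yields
\begin{equation*}
\tfrac{c_r}{2}\|K(u)-K(u^\dagger)\|^2+\tfrac12\|u-u^\dagger\|^2-\langle w,E(u,u^\dagger)\rangle+\langle\mu^\dagger,u-u^\dagger\rangle\ge\tfrac{1-L\|w\|}{2}\|u-u^\dagger\|^2.
\end{equation*}
Since \eqref{cond:clsnon} assumes $L\|w\|<1$, we may set $\epsilon:=1-L\|w\|>0$ (and take $c_r=0$), which is exactly \eqref{ncon}. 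This step is essentially routine; the whole argument is short.

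The only point requiring a little care — and the closest thing to an obstacle — is the justification of the integral representation of $E(u,u^\dagger)$: strictly one needs $K'$ to be (Bochner) integrable along the segment, which follows from continuity of $t\mapsto K'(u^\dagger+t(u-u^\dagger))$, itself a consequence of the assumed Lipschitz continuity of $K'$ on $\mathcal{C}$. One should also note that it suffices for \eqref{cond:clsnon} to hold in a neighborhood of $u^\dagger$ large enough to contain the relevant segments, or one restricts attention to $u$ in such a neighborhood, as is standard in convergence rates analysis; I would remark on this rather than belabor it.
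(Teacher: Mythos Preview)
Your proposal is correct and follows essentially the same route as the paper: bound $|\langle w,E(u,u^\dagger)\rangle|$ by Cauchy--Schwarz together with $\|E(u,u^\dagger)\|\le\tfrac{L}{2}\|u-u^\dagger\|^2$, drop the nonnegative terms $\tfrac{c_r}{2}\|K(u)-K(u^\dagger)\|^2$ and $\langle\mu^\dagger,u-u^\dagger\rangle$, and conclude with $\epsilon=1-L\|w\|>0$, $c_r=0$. The only difference is that you spell out the integral mean-value representation to justify the $\tfrac{L}{2}$ bound, whereas the paper simply invokes Lipschitz continuity of $K'$ for that step.
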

\begin{proof}
A direct estimate shows that under condition \eqref{cond:clsnon}, we have
\begin{equation*}
\begin{aligned}
|\langle w,E(u,u^\dagger)\rangle|\leq \|w\|\|E(u,u^\dagger)\|\leq \|w\|\cdot\tfrac{L}{2}\|u-u^\dagger\|^2
\end{aligned}
\end{equation*}
by $\|E(u,u^\dagger)\|\leq\tfrac{L}{2}\|u-u^\dagger\|^2$ from the Lipschitz continuity of
the operator $K'(u)$. Consequently,
\begin{equation*}
\begin{aligned}
   \tfrac{1}{2}\|u-u^\dagger\|^2-\langle w, E(u,u^\dagger)\rangle+\langle\mu^\dagger,u-u^\dagger\rangle&\geq
   \tfrac{1}{2}\|u-u^\dagger\|^2-\tfrac{1}{2}L\|w\|\|u-u^\dagger\|^2+\langle\mu^\dagger,u-u^\dagger\rangle\\
   &\geq \tfrac{1-L\|w\|}{2}\|u-u^\dagger\|^2,
\end{aligned}
\end{equation*}
by noting the relation $\langle \mu^\dagger,u-u^\dagger\rangle\geq0$ for any
$u\in\mathcal{C}$. This shows that condition \eqref{ncon} holds with
$\epsilon=1-L\|w\|>0$ and $c_r=0$.
\end{proof}

\begin{rmk}
The condition $L\|w\|<1$ is used for bounding the nonlinearity term $\langle
w,E(u_\eta,u^\dagger)\rangle$ from above. This is achieved by Cauchy-Schwarz inequality,
and thus the estimate might be too pessimistic since in general $\langle
w,E(u,u^\dagger)\rangle$ can be either indefinite or negative. This might explain the
effectiveness of Tikhonov regularization in practice even though assumption
\eqref{cond:clsnon} on the solution $u^\dagger$ and the operator $K(u)$ may be not
verified.
\end{rmk}

\subsection{Basic error estimates}

We derive two basic error estimates under the source condition \eqref{scon} and the
nonlinearity condition \eqref{ncon}: the approximation error $\|u_\eta-u^\dagger\|$ due
to the use of regularization and the propagation error $\|u_\eta^\delta-u_\eta\|$ due to
the presence of data noises. These estimates are useful for analyzing convergence rates
of some parameter selection rules as $\eta\to0^+$
\cite{EnglHankNeubauer:1996,TautenhahnJin:2003}.
\begin{lem}\label{lem:err}
Assume that conditions \eqref{scon} and \eqref{ncon} hold. Then the approximation error
$\|u_\eta-u^\dagger\|$ satisfies
\begin{equation*}
\|u_\eta-u^\dagger\|
\leq\epsilon^{-\frac{1}{2}}\|w\|\frac{\sqrt{\eta}}{\sqrt{1-c_r\eta}}
\quad \mbox{and}\quad \|K(u_\eta)-g^\dagger\|\leq
\frac{2\eta}{1-c_r\eta}\|w\|.
\end{equation*}
Moreover, if there exists some $\epsilon'>0$ independent of $\eta$ such that the
second-order sufficient optimality condition \eqref{eqn:suffccond} holds for all
$u\in\mathcal{C}$, then the propagation error $\|u_\eta^\delta-u_\eta\|$ satisfies
\begin{equation*}
\|u_\eta^\delta-u_\eta\|\leq\frac{1}{\sqrt{\epsilon'c_s}}\frac{\delta}{\sqrt{\eta}}\quad\mbox{and}\quad
\|K(u_\eta^\delta)-K(u_\eta)\|\leq \frac{2\delta}{c_s}.
\end{equation*}
\end{lem}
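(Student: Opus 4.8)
The plan is to derive each pair of bounds by combining a comparison (minimization) inequality with the relevant optimality, source, or nonlinearity condition, and then reducing the result to a scalar quadratic inequality.

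\textbf{Approximation error.} I would start from the minimizing property of $u_\eta$ for the exact data, $\tfrac12\|K(u_\eta)-g^\dagger\|^2+\tfrac\eta2\|u_\eta\|^2\le\tfrac12\|K(u^\dagger)-g^\dagger\|^2+\tfrac\eta2\|u^\dagger\|^2=\tfrac\eta2\|u^\dagger\|^2$, and expand both squared norms with the identity $\tfrac12\|a\|^2-\tfrac12\|b\|^2=\tfrac12\|a-b\|^2+\langle b,a-b\rangle$ exactly as in the proof of Lemma~\ref{lem:2ndneccond}; this gives
\[
\tfrac12\|K(u_\eta)-g^\dagger\|^2+\tfrac\eta2\|u_\eta-u^\dagger\|^2+\eta\langle u^\dagger,u_\eta-u^\dagger\rangle\le0.
\]
Next I would insert the source condition \eqref{scon} in the form $u^\dagger=K'(u^\dagger)^\ast w+\mu^\dagger$ together with $K'(u^\dagger)(u_\eta-u^\dagger)=K(u_\eta)-g^\dagger-E(u_\eta,u^\dagger)$, so that $\eta\langle u^\dagger,u_\eta-u^\dagger\rangle=\eta\langle w,K(u_\eta)-g^\dagger\rangle-\eta\langle w,E(u_\eta,u^\dagger)\rangle+\eta\langle\mu^\dagger,u_\eta-u^\dagger\rangle$. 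Applying the nonlinearity condition \eqref{ncon} with $u=u_\eta$ (multiplied by $\eta$) then bounds $-\eta\langle w,E(u_\eta,u^\dagger)\rangle+\eta\langle\mu^\dagger,u_\eta-u^\dagger\rangle$ from below, and after cancellation one obtains
\[
\tfrac{1-c_r\eta}{2}\|K(u_\eta)-g^\dagger\|^2+\eta\langle w,K(u_\eta)-g^\dagger\rangle+\tfrac{\epsilon\eta}{2}\|u_\eta-u^\dagger\|^2\le0.
\]
Setting $t=\|K(u_\eta)-g^\dagger\|$ and using $\eta\langle w,K(u_\eta)-g^\dagger\rangle\ge-\eta\|w\|t$ turns this into $\tfrac{1-c_r\eta}{2}t^2-\eta\|w\|t+\tfrac{\epsilon\eta}{2}\|u_\eta-u^\dagger\|^2\le0$; bounding $\eta\|w\|t-\tfrac{1-c_r\eta}{2}t^2$ by its maximum $\tfrac{\eta^2\|w\|^2}{2(1-c_r\eta)}$ yields the bound on $\|u_\eta-u^\dagger\|$, while discarding the nonnegative last term gives $t\le2\eta\|w\|/(1-c_r\eta)$.

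\textbf{Propagation error.} The argument is structurally the same, now comparing $u_\eta^\delta$ with $u_\eta$. From the minimizing property of $u_\eta^\delta$ for the noisy data, $\tfrac12\|K(u_\eta^\delta)-g^\delta\|^2+\tfrac\eta2\|u_\eta^\delta\|^2\le\tfrac12\|K(u_\eta)-g^\delta\|^2+\tfrac\eta2\|u_\eta\|^2$, and the same squared-norm identities,
\[
\tfrac12\|K(u_\eta^\delta)-K(u_\eta)\|^2+\langle K(u_\eta)-g^\delta,K(u_\eta^\delta)-K(u_\eta)\rangle+\tfrac\eta2\|u_\eta^\delta-u_\eta\|^2+\eta\langle u_\eta,u_\eta^\delta-u_\eta\rangle\le0.
\]
Here I would eliminate $\eta\langle u_\eta,u_\eta^\delta-u_\eta\rangle$ via the first-order optimality system \eqref{eqn:1stopteta} for $u_\eta$ and $K'(u_\eta)(u_\eta^\delta-u_\eta)=K(u_\eta^\delta)-K(u_\eta)-E(u_\eta^\delta,u_\eta)$; the two fidelity inner products then combine into $\langle g^\dagger-g^\delta,K(u_\eta^\delta)-K(u_\eta)\rangle$, and what remains is exactly the left-hand side of the second-order sufficient condition \eqref{eqn:suffccond} at $u=u_\eta^\delta$ plus this noise term. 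Invoking \eqref{eqn:suffccond} with the $\eta$-independent $\epsilon'$ gives
\[
\tfrac{c_s}{2}\|K(u_\eta^\delta)-K(u_\eta)\|^2+\tfrac{\epsilon'\eta}{2}\|u_\eta^\delta-u_\eta\|^2+\langle g^\dagger-g^\delta,K(u_\eta^\delta)-K(u_\eta)\rangle\le0,
\]
and, with $s=\|K(u_\eta^\delta)-K(u_\eta)\|$ and $\langle g^\dagger-g^\delta,K(u_\eta^\delta)-K(u_\eta)\rangle\ge-\delta s$, bounding $\delta s-\tfrac{c_s}{2}s^2$ by its maximum $\tfrac{\delta^2}{2c_s}$ gives both $\|u_\eta^\delta-u_\eta\|\le\delta/\sqrt{\epsilon'c_s\eta}$ and $s\le2\delta/c_s$.

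The algebraic identities and the final scalar optimizations are routine. The step that needs care — and where the whole scheme really hinges — is the substitution of the optimality/source conditions so that the indefinite cross terms ($\langle w,E(u_\eta,u^\dagger)\rangle$ and $\langle\mu^\dagger,\cdot\rangle$ in the first part; the $K'(u_\eta)^\ast$- and $\mu_\eta$-terms in the second) line up precisely with those occurring in \eqref{ncon} and \eqref{eqn:suffccond}, so that applying those conditions produces a clean coercive lower bound rather than leftover uncontrolled quantities. One should also note that the first bound tacitly assumes $\eta$ small enough that $1-c_r\eta>0$, which is the regime of interest as $\eta\to0^+$.
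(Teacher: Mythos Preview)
Your proposal is correct and follows essentially the same route as the paper: the same minimization inequalities, the same insertion of \eqref{scon}/\eqref{eqn:1stopteta} together with the definition of $E$, and the same application of \eqref{ncon}/\eqref{eqn:suffccond} to arrive at the key scalar inequalities, after which Young's inequality (equivalently, your maximization of the quadratic in $t$ or $s$) yields both estimates. The only cosmetic difference is that the paper applies Cauchy--Schwarz to $\langle w,K(u_\eta)-g^\dagger\rangle$ one line earlier, before invoking \eqref{ncon}, whereas you carry the inner product through and bound it afterwards; the resulting inequalities and constants are identical.
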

\begin{proof}
The minimizing property of the approximation $u_\eta$ and the relation
$g^\dagger=K(u^\dagger)$ imply
\begin{equation*}
\tfrac{1}{2}\|K(u_\eta)-g^\dagger\|^2+\tfrac{\eta}{2}\|u_\eta\|^2\leq
\tfrac{\eta}{2}\|u^\dagger\|^2.
\end{equation*}
The source condition \eqref{scon} and Cauchy-Schwarz inequality give
\begin{equation*}
\begin{aligned}
\tfrac{1}{2}\|K(u_\eta)-g^\dagger\|^2+\tfrac{\eta}{2}\|u_\eta-u^\dagger\|^2&\leq
-\eta\langle u^\dagger,u_\eta-u^\dagger\rangle\\
&=-\eta\langle w, K'(u^\dagger)(u_\eta-u^\dagger)\rangle-\eta\langle\mu^\dagger, u_\eta-u^\dagger\rangle\\
&=-\eta\langle w,K(u_\eta)-g^\dagger\rangle+\eta\langle w,E(u_\eta,u^\dagger)\rangle-\eta\langle\mu^\dagger,u_\eta-u^\dagger\rangle\\
&\leq\eta\|w\|\|K(u_\eta)-g^\dagger\|+\eta\langle w,E(u_\eta,u^\dagger)\rangle-\eta\langle\mu^\dagger,u_\eta-u^\dagger\rangle.
\end{aligned}
\end{equation*}
By appealing to the nonlinearity condition \eqref{ncon}, we arrive at
\begin{equation*}
\tfrac{1-c_r\eta}{2}\|K(u_\eta)-g^\dagger\|^2+\tfrac{\epsilon\eta}{2}\|u_\eta-u^\dagger\|^2\leq
\eta\|w\|\|K(u_\eta)-g^\dagger\|.
\end{equation*}
Consequently, by ignoring the term $\frac{\epsilon\eta}{2}\|u_\eta-u^\dagger\|^2$, we
derive the estimate
\begin{equation*}
\|K(u_\eta)-g^\dagger\|\leq \tfrac{2\eta}{1-c_r\eta}\|w\|,
\end{equation*}
and meanwhile, by invoking Young's inequality, we have
\begin{equation*}
\tfrac{1-c_r\eta}{2}\|K(u_\eta)-g^\dagger\|^2+\tfrac{\epsilon\eta}{2}\|u_\eta-u^\dagger\|^2\leq
\tfrac{1}{2(1-c_r\eta)}\eta^2\|w\|^2+\tfrac{1-c_r\eta}{2}\|K(u_\eta)-g^\dagger\|^2,
\end{equation*}
i.e., $\|u_\eta-u^\dagger\|\leq\epsilon^{-\frac{1}{2}}\|w\|
\tfrac{\sqrt{\eta}}{\sqrt{1-c_r\eta}}$. This shows the first assertion.

Next we turn to the propagation error $\|u_\eta^\delta-u_\eta\|$. We use the optimality
of the minimizer $u_\eta^\delta$ to get
\begin{equation}\label{eqn:app}
\tfrac{1}{2}\|K(u_\eta^\delta)-g^\delta\|^2+\tfrac{\eta}{2}\|u_\eta^\delta-u_\eta\|^2\leq
\tfrac{1}{2}\|K(u_\eta)-g^\delta\|^2-\eta\langle
u_\eta,u_\eta^\delta-u_\eta\rangle.
\end{equation}
Upon substituting the optimality condition of $u_\eta$ (cf. \eqref{eqn:1stopteta}), i.e.,
\begin{equation*}
\eta u_\eta=-K'(u_\eta)^\ast (K(u_\eta)-g^\dagger)+\eta\mu_\eta,
\end{equation*}
into \eqref{eqn:app}, we arrive at
\begin{equation*}
\begin{aligned}
\tfrac{1}{2}\|K(u_\eta^\delta)-g^\delta\|^2&+\tfrac{\eta}{2}\|u_\eta^\delta-u_\eta\|^2\leq
\tfrac{1}{2}\|K(u_\eta)-g^\delta\|^2+\langle
K(u_\eta)-g^\dagger,K'(u_\eta)(u_\eta^\delta-u_\eta)\rangle-\eta\langle\mu_\eta,u_\eta^\delta-u_\eta\rangle \\
&=\tfrac{1}{2}\|K(u_\eta^\delta)-g^\delta\|^2+\tfrac{1}{2}\|K(u_\eta^\delta)-K(u_\eta)\|^2-\langle
K(u_\eta^\delta)-g^\delta,K(u_\eta^\delta)-K(u_\eta)\rangle\\
&\quad+\langle
K(u_\eta)-g^\dagger,K'(u_\eta)(u_\eta^\delta-u_\eta)\rangle-\eta\langle\mu_\eta,u_\eta^\delta-u_\eta\rangle.
\end{aligned}
\end{equation*}
Now the second-order error $E(u_\eta^\delta,u_\eta)$ and the Cauchy-Schwarz inequality
yield
\begin{equation*}
\begin{aligned}
&\tfrac{1}{2}\|K(u_\eta^\delta)-K(u_\eta)\|^2+\tfrac{\eta}{2}\|u_\eta^\delta-u_\eta\|^2+\eta\langle\mu_\eta,u_\eta^\delta-u_\eta\rangle\\
\leq&-\langle K(u_\eta)-g^\delta,K(u_\eta^\delta)-K(u_\eta)\rangle+\langle
K(u_\eta)-g^\dagger,K'(u_\eta)(u_\eta^\delta-u_\eta)\rangle\\
=&\langle g^\delta-g^\dagger,K(u_\eta^\delta)-K(u_\eta)\rangle-\langle
K(u_\eta)-g^\dagger, E(u_\eta^\delta,u_\eta)\rangle\\
\leq&\|g^\delta-g^\dagger\|\|K(u_\eta^\delta)-K(u_\eta)\|-\langle
K(u_\eta)-g^\dagger, E(u_\eta^\delta,u_\eta)\rangle.
\end{aligned}
\end{equation*}
Consequently, we have
\begin{equation*}
\tfrac{1}{2}\|K(u_\eta^\delta)-K(u_\eta)\|^2+\tfrac{\eta}{2}\|u_\eta^\delta-u_\eta\|^2+\langle K(u_\eta)-g^\dagger, E(u_\eta^\delta,u_\eta)\rangle
+\eta\langle\mu_\eta,u_\eta^\delta-u_\eta\rangle \leq\delta\|K(u_\eta^\delta)-K(u_\eta)\|.
\end{equation*}
Finally, the second-order sufficient optimality condition \eqref{eqn:suffccond} implies
\begin{equation*}
\|K(u_\eta^\delta)-K(u_\eta)\|\leq \tfrac{2\delta}{c_s}
\quad \mbox{and} \quad
\|u_\eta^\delta-u_\eta\|\leq \tfrac{1}{\sqrt{\epsilon'c_s}}\tfrac{\delta}{\sqrt{\eta}}.
\end{equation*}
This completes the proof of the lemma.
\end{proof}

\section{Applications to choice rules}\label{sec:err}
In this part, we illustrate the utility of the proposed nonlinearity condition
\eqref{ncon} for analyzing selection rules, i.e., a priori rule, discrepancy principle
\cite{Morozov:1966}, balancing principle \cite{JinZou:2009,ItoJinZou:2010,
ItoJinTakeuchi:2010} and Hanke-Raus rule \cite{HankeRaus:1996}, in deriving either a
priori or a posteriori error estimates. The a posteriori error estimates seem new for
nonlinear problems. Also a first consistency result is provided for the balancing
principle.

\subsection{A priori parameter choice \& discrepancy principle}

First, we give a convergence rates result for a priori parameter choices.
\begin{thm}\label{thm:apriori}
Under conditions \eqref{scon} and \eqref{ncon}, we have the following estimates
\begin{equation*}
 \begin{array}{l}
  \|u_\eta^\delta-u^\dagger\|\leq \epsilon^{-\frac{1}{2}}
  \left(\tfrac{\sqrt{1+2c_r\eta}}{\sqrt{\eta}}\delta
  +\tfrac{\sqrt{\eta}}{\sqrt{1-2c_r\eta}}\|w\|+\sqrt{2\|w\|\delta}\right),\\
  \|K(u_\eta^\delta)-g^\delta\|\leq\tfrac{1}{\sqrt{1-2c_r\eta}}\left(
  \sqrt{1+2c_r\eta}\delta+\tfrac{2}{\sqrt{1-2c_r\eta}}\eta\|w\|+\sqrt{2\|w\|\eta\delta}\right).
 \end{array}
\end{equation*}
\end{thm}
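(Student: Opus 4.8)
The plan is to derive the two estimates by combining the triangle inequality with the two basic error estimates already established in Lemma~\ref{lem:err}. The natural split is
\[
\|u_\eta^\delta-u^\dagger\|\leq \|u_\eta^\delta-u_\eta\|+\|u_\eta-u^\dagger\|,
\]
and similarly $\|K(u_\eta^\delta)-g^\delta\|\leq \|K(u_\eta^\delta)-K(u_\eta)\|+\|K(u_\eta)-g^\dagger\|+\|g^\dagger-g^\delta\|$. For the second summand in each case, Lemma~\ref{lem:err} already supplies the bounds $\|u_\eta-u^\dagger\|\leq \epsilon^{-1/2}\|w\|\sqrt{\eta}/\sqrt{1-c_r\eta}$ and $\|K(u_\eta)-g^\dagger\|\leq 2\eta\|w\|/(1-c_r\eta)$, and $\|g^\dagger-g^\delta\|=\delta$ by definition. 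So the crux is to obtain a bound on the propagation error $\|u_\eta^\delta-u_\eta\|$ (and $\|K(u_\eta^\delta)-K(u_\eta)\|$) \emph{without} assuming the strengthened second-order sufficient condition \eqref{eqn:suffccond} with a uniform $\epsilon'$ — only conditions \eqref{scon} and \eqref{ncon} are hypothesized here.

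The key step, therefore, is to rerun the propagation-error computation from the proof of Lemma~\ref{lem:err} but route the nonlinearity term $\langle K(u_\eta)-g^\dagger, E(u_\eta^\delta,u_\eta)\rangle$ differently. Instead of invoking \eqref{eqn:suffccond}, I would first translate $\langle K(u_\eta)-g^\dagger,\,\cdot\,\rangle$ into $-\eta\langle w,\,\cdot\,\rangle$ plus a controllable remainder using the source condition, or more directly use the a priori bound $\|K(u_\eta)-g^\dagger\|\leq 2\eta\|w\|/(1-c_r\eta)$ just proved, so that the indefinite term is estimated via Cauchy--Schwarz as $\|K(u_\eta)-g^\dagger\|\,\|E(u_\eta^\delta,u_\eta)\|$. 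The cleaner route is to go back one level and compare $u_\eta^\delta$ directly with $u^\dagger$: use the minimizing property $J_\eta(u_\eta^\delta)\leq J_\eta(u^\dagger)$ with the noisy data, expand the squares using the two elementary identities from Lemma~\ref{lem:2ndneccond}, substitute the source condition \eqref{scon} (which gives $\eta u^\dagger = \eta K'(u^\dagger)^\ast w + \eta\mu^\dagger$) to convert the cross term $\eta\langle u^\dagger, u_\eta^\delta-u^\dagger\rangle$ into $\eta\langle w, K(u_\eta^\delta)-g^\dagger\rangle - \eta\langle w, E(u_\eta^\delta,u^\dagger)\rangle + \eta\langle\mu^\dagger, u_\eta^\delta-u^\dagger\rangle$, and then absorb the first of these by Cauchy--Schwarz against $\delta+\|K(u_\eta^\delta)-g^\delta\|$ (noting $\|K(u_\eta^\delta)-g^\dagger\|\le \|K(u_\eta^\delta)-g^\delta\|+\delta$) while controlling the $E$-term and the $\mu^\dagger$-term by the nonlinearity condition \eqref{ncon}. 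This yields, after completing the square, an inequality of the form
\[
\tfrac{1-2c_r\eta}{2}\|K(u_\eta^\delta)-g^\delta\|^2+\tfrac{\epsilon\eta}{2}\|u_\eta^\delta-u^\dagger\|^2 \leq \tfrac12\delta^2 + (\text{terms linear in }\delta\text{ and }\eta\|w\|),
\]
from which both stated bounds follow by elementary manipulations (separating the quadratic, using $\sqrt{a+b+c}\le\sqrt a+\sqrt b+\sqrt c$, and Young's inequality to extract the $\sqrt{2\|w\|\delta}$ and $\sqrt{2\|w\|\eta\delta}$ cross terms).

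I expect the main obstacle to be the bookkeeping around the indefinite terms: the cross term $\eta\langle w, K(u_\eta^\delta)-g^\dagger\rangle$ must be split so that part of it is absorbed into $\tfrac{1}{2}\|K(u_\eta^\delta)-g^\delta\|^2$ on the left (producing the factor $1-2c_r\eta$ rather than $1-c_r\eta$, hence the slightly different constants from Lemma~\ref{lem:err}) and part into a $\delta\|w\|$ contribution, all while the nonlinearity term $-\eta\langle w, E(u_\eta^\delta,u^\dagger)\rangle$ is simultaneously being fed into \eqref{ncon} with its $\tfrac{c_r}{2}\|K(u_\eta^\delta)-g^\dagger\|^2$ coefficient — one has to check that the $\|K(u_\eta^\delta)-g^\dagger\|^2$ bookkeeping is consistent (converting between $g^\dagger$ and $g^\delta$ costs factors of $\delta$), and that the coefficient of $\|K(u_\eta^\delta)-g^\delta\|^2$ stays positive, which is exactly the role of the restriction $c_r\eta<\tfrac12$ implicit in the appearance of $\sqrt{1-2c_r\eta}$. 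Once the signs and the completion of squares are arranged correctly, the passage to the final displayed inequalities is routine.
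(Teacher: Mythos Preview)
Your proposal is correct and, after you discard the initial triangle-inequality split (rightly observing it would need condition \eqref{eqn:suffccond}, which is not assumed here), the ``cleaner route'' you settle on---comparing $u_\eta^\delta$ directly with $u^\dagger$ via $J_\eta(u_\eta^\delta)\le J_\eta(u^\dagger)$, inserting the source condition \eqref{scon}, and absorbing the $E$-term through \eqref{ncon} while converting $\|K(u_\eta^\delta)-K(u^\dagger)\|^2$ to $\|K(u_\eta^\delta)-g^\delta\|^2$ at the cost of the $1\pm 2c_r\eta$ factors---is exactly the paper's argument. The only detail to tighten is that the $\delta^2$ term on the right carries the coefficient $\tfrac{1+2c_r\eta}{2}$ (not $\tfrac12$), arising from the same $g^\dagger\leftrightarrow g^\delta$ conversion you mention.
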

\begin{proof}
In view of the optimality of the minimizer $u_\eta^\delta$ and the source condition
\eqref{scon}, we have
\begin{equation*}
  \begin{aligned}
    \tfrac{1}{2}\|K(u_\eta^\delta)-g^\delta\|^2+\tfrac{\eta}{2}\|u_\eta^\delta-u^\dagger\|^2&\leq
    \tfrac{1}{2}\|K(u^\dagger)-g^\delta\|^2-\eta\langle u^\dagger,u_\eta^\delta-u^\dagger\rangle\\
    &=\tfrac{1}{2}\|K(u^\dagger)-g^\delta\|^2-\eta\langle
    w,K'(u^\dagger)(u_\eta^\delta-u^\dagger)\rangle-\eta\langle\mu^\dagger,u_\eta^\delta-u^\dagger\rangle.
  \end{aligned}
\end{equation*}
With the help of the second-order error $E(u,\tilde{u})$, we deduce
\begin{equation*}
  \begin{aligned}
    \tfrac{1}{2}\|K(u_\eta^\delta)-g^\delta\|^2+\tfrac{\eta}{2}\|u_\eta^\delta-u^\dagger\|^2-\eta\langle
     w, &E(u_\eta^\delta,u^\dagger)\rangle+\eta\langle\mu^\dagger,u_\eta^\delta-u^\dagger\rangle\\
      &\leq\tfrac{1}{2}\|K(u^\dagger)-g^\delta\|^2-\eta\langle w,K(u_\eta^\delta)-K(u^\dagger)\rangle.
  \end{aligned}
\end{equation*}
Now from the nonlinearity condition \eqref{ncon} and the Cauchy-Schwarz and Young's
inequalities, we obtain
\begin{equation*}
  \begin{aligned}
    \tfrac{1-2c_r\eta}{2}\|K(u_\eta^\delta)-g^\delta\|^2
    +\tfrac{\epsilon\eta}{2}\|u_\eta^\delta-u^\dagger\|^2&\leq
    \tfrac{1+2c_r\eta}{2}\|K(u^\dagger)-g^\delta\|^2-\eta\langle w,K(u_\eta^\delta)-K(u^\dagger)\rangle\\
    &\leq\tfrac{1+2c_r\eta}{2}\delta^2+\eta\|w\|\left(\|K(u_\eta^\delta)-g^\delta\|+\|g^\dagger-g^\delta\|\right)\\
    &\leq\tfrac{1+2c_r\eta}{2}\delta^2+\eta\|w\|\|K(u_\eta^\delta)-g^\delta\|+\eta\delta\|w\|,
  \end{aligned}
\end{equation*}
where we have made use of the inequality
\begin{equation*}
\|K(u_\eta^\delta)-K(u^\dagger)\|^2\leq 2\|K(u_\eta^\delta)-g^\delta\|^2 +
2\|K(u^\dagger)-g^\delta\|^2.
\end{equation*}
Using Young's inequality again and the fact that $c^2\leq a^2+b^2(a,b,c\geq0)$ implies
$c\leq a+b$ gives
\begin{equation*}
    \|u_\eta^\delta-u^\dagger\|\leq \epsilon^{-\frac{1}{2}}
    \left(\tfrac{\sqrt{1+2c_r\eta}}{\sqrt{\eta}}\delta+\tfrac{\sqrt{\eta}}{\sqrt{1-2c_r\eta}}\|w\|+\sqrt{2\delta\|w\|}\right).
\end{equation*}
Meanwhile, by ignoring the term $\tfrac{\epsilon}{2}\|u_\eta^\delta- u^\dagger\|^2$, we
deduce
\begin{equation*}
    \|K(u_\eta^\delta)-g^\delta\|\leq \tfrac{1}{\sqrt{1-2c_r\eta}}\left(
    \sqrt{1+2c_r\eta}\delta+\tfrac{2}{\sqrt{1-2c_r\eta}}\eta\|w\|+\sqrt{2\eta\delta\|w\|}\right).
\end{equation*}
This concludes the proof of the theorem.
\end{proof}

Therefore, the a priori choice $\eta\sim\delta$ achieves a convergence rate
$\mathcal{O}(\delta^{\frac{1}{2}})$ and $\mathcal{O}(\delta)$ for the error
$\|u_\eta^\delta-u^\dagger\|$ and for the residual $\|K(u_\eta^\delta)-g^\dagger\|$,
respectively, which coincide with that for the classical nonlinearity condition
\eqref{cond:clsnon} \cite{EnglKunischNeubauer:1989}.

Next we illustrate the proposed approach for the popular discrepancy principle due to
Morozov \cite{Morozov:1966}, i.e., we determine an optimal parameter $\eta$ by: for some
constant $c_m\geq1$
\begin{equation}\label{eqn:morozov}
 \|K(u_\eta^\delta)-g^\delta\|=c_m\delta.
\end{equation}
The principle is very useful if a reliable estimate of the noise level $\delta$ is
available. The rationale is that the accuracy of the solution $u_\eta^\delta$ should be
consistent with that of the data (in terms of residual). The consistency of the principle
is well-known, and also it achieves a convergence rate $\mathcal{O}(\delta^\frac{1}{2})$
under the classical nonlinearity condition \eqref{cond:clsnon}
\cite{EnglKunischNeubauer:1989}. The following result shows that the (weaker)
nonlinearity condition \eqref{ncon} can reproduce the canonical convergence rate
$\mathcal{O}(\delta^\frac{1}{2})$. We would like to remark that the principle can be
efficiently implemented by either the model function approach or quasi-Newton method
\cite{JinZou:2010}.
\begin{thm}\label{thm:morozov}
Let conditions \eqref{scon} and \eqref{ncon} be fulfilled, and $\eta^\ast$ be determined
by principle \eqref{eqn:morozov}. Then the solution $u_{\eta^\ast}^\delta$ satisfies the
following estimate
\begin{equation*}
    \|u_{\eta^\ast}^\delta-u^\dagger\|\leq\tfrac{1}{\sqrt{\epsilon}}\left(\sqrt{2(1+c_m)\|w\|}
    \delta^\frac{1}{2}+(1+c_m)\sqrt{c_r}\delta\right).
\end{equation*}
\end{thm}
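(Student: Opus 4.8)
The plan is to follow the same template as the proof of Theorem~\ref{thm:apriori}, but to use the defining relation \eqref{eqn:morozov} of the discrepancy principle to annihilate the residual contributions \emph{before} dividing by $\eta^\ast$. First I would start from the minimizing property $J_{\eta^\ast}(u_{\eta^\ast}^\delta)\le J_{\eta^\ast}(u^\dagger)$, use the identity $\tfrac12\|u_{\eta^\ast}^\delta\|^2-\tfrac12\|u^\dagger\|^2=\tfrac12\|u_{\eta^\ast}^\delta-u^\dagger\|^2+\langle u^\dagger,u_{\eta^\ast}^\delta-u^\dagger\rangle$, and rearrange to
\[
\tfrac{\eta^\ast}{2}\|u_{\eta^\ast}^\delta-u^\dagger\|^2\le\tfrac12\|K(u^\dagger)-g^\delta\|^2-\tfrac12\|K(u_{\eta^\ast}^\delta)-g^\delta\|^2-\eta^\ast\langle u^\dagger,u_{\eta^\ast}^\delta-u^\dagger\rangle .
\]
Since $\|K(u^\dagger)-g^\delta\|=\|g^\dagger-g^\delta\|=\delta$ and $\|K(u_{\eta^\ast}^\delta)-g^\delta\|=c_m\delta$ with $c_m\ge1$, the first two terms on the right equal $\tfrac12(1-c_m^2)\delta^2\le0$ and may be dropped; dividing by $\eta^\ast>0$ then gives $\tfrac12\|u_{\eta^\ast}^\delta-u^\dagger\|^2\le-\langle u^\dagger,u_{\eta^\ast}^\delta-u^\dagger\rangle$.

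Next I would insert the source condition \eqref{scon}, namely $u^\dagger=K'(u^\dagger)^\ast w+\mu^\dagger$, together with $K'(u^\dagger)(u_{\eta^\ast}^\delta-u^\dagger)=K(u_{\eta^\ast}^\delta)-g^\dagger-E(u_{\eta^\ast}^\delta,u^\dagger)$, so that the right-hand side above becomes $-\langle w,K(u_{\eta^\ast}^\delta)-g^\dagger\rangle+\langle w,E(u_{\eta^\ast}^\delta,u^\dagger)\rangle-\langle\mu^\dagger,u_{\eta^\ast}^\delta-u^\dagger\rangle$. Transferring the $E$-term and the $\mu^\dagger$-term to the left and adding $\tfrac{c_r}{2}\|K(u_{\eta^\ast}^\delta)-g^\dagger\|^2$ to both sides, the left-hand side becomes precisely the expression occurring in the nonlinearity condition \eqref{ncon} (with $u=u_{\eta^\ast}^\delta\in\mathcal{C}$ and $\|K(u_{\eta^\ast}^\delta)-K(u^\dagger)\|=\|K(u_{\eta^\ast}^\delta)-g^\dagger\|$ since $K(u^\dagger)=g^\dagger$), hence it is bounded below by $\tfrac{\epsilon}{2}\|u_{\eta^\ast}^\delta-u^\dagger\|^2$. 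This yields
\[
\tfrac{\epsilon}{2}\|u_{\eta^\ast}^\delta-u^\dagger\|^2\le-\langle w,K(u_{\eta^\ast}^\delta)-g^\dagger\rangle+\tfrac{c_r}{2}\|K(u_{\eta^\ast}^\delta)-g^\dagger\|^2 .
\]

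To finish I would bound the two right-hand terms by Cauchy--Schwarz together with the triangle inequality $\|K(u_{\eta^\ast}^\delta)-g^\dagger\|\le\|K(u_{\eta^\ast}^\delta)-g^\delta\|+\|g^\delta-g^\dagger\|=(1+c_m)\delta$, obtaining $\tfrac{\epsilon}{2}\|u_{\eta^\ast}^\delta-u^\dagger\|^2\le(1+c_m)\|w\|\delta+\tfrac{c_r}{2}(1+c_m)^2\delta^2$, and then take square roots using $\sqrt{a^2+b^2}\le a+b$ for $a,b\ge0$ to reach the stated bound. I do not anticipate any real obstacle: once \eqref{scon} and \eqref{ncon} are available the argument is essentially bookkeeping, and the only step needing care is the sign accounting in the first paragraph --- in particular recognizing that the hypothesis $c_m\ge1$ is exactly what lets the residual terms disappear, so that one may divide by $\eta^\ast$ with no control on $\eta^\ast$ itself (the proof requires neither an a priori bound on $\eta^\ast$ nor the existence or monotonicity of the discrepancy map, which is a separate issue).
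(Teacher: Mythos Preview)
Your proof is correct and follows essentially the same route as the paper: both use the minimizing property together with $c_m\ge1$ to reach $\tfrac12\|u_{\eta^\ast}^\delta-u^\dagger\|^2\le-\langle u^\dagger,u_{\eta^\ast}^\delta-u^\dagger\rangle$ (the paper compresses your first paragraph into the single line $\|u_{\eta^\ast}^\delta\|^2\le\|u^\dagger\|^2$), then insert \eqref{scon}, apply \eqref{ncon}, and bound $\|K(u_{\eta^\ast}^\delta)-g^\dagger\|\le(1+c_m)\delta$ by the triangle inequality. There is no substantive difference.
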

\begin{proof}
The minimizing property of $u_{\eta^\ast}^\delta$ and the defining relation
\eqref{eqn:morozov} imply
\begin{equation*}
\|u_{\eta^\ast}^\delta\|^2\leq \|u^\dagger\|^2.
\end{equation*}
Upon utilizing the source condition \eqref{scon} and the second-order error
$E(u_{\eta^\ast}^\delta, u^\dagger)$, we deduce
\begin{equation*}
\begin{aligned}
  \tfrac{1}{2}\|u_{\eta^\ast}^\delta-u^\dagger\|^2&\leq-\langle u^\dagger,u_{\eta^\ast}^\delta-u^\dagger\rangle\\
      &=-\langle K'(u^\dagger)^*w+\mu^\dagger,u_{\eta^\ast}^\delta-u^\dagger\rangle\\
      &=-\langle w,K'(u^\dagger)(u_{\eta^\ast}^\delta-u^\dagger)\rangle-\langle\mu^\dagger,
          u_{\eta^\ast}^\delta-u^\dagger\rangle\\
      &=-\langle w,K(u_{\eta^\ast}^\delta)-K(u^\dagger)\rangle+\langle w,E(u^\delta_{\eta^\ast},
      u^\dagger)\rangle-\langle\mu^\dagger,u_{\eta^\ast}^\delta-u^\dagger\rangle.
\end{aligned}
\end{equation*}
Now the nonlinearity condition \eqref{ncon} yields
\begin{equation*}
\begin{aligned}
  \tfrac{\epsilon}{2}\|u_{\eta^\ast}^\delta-u^\dagger\|^2&\leq \|w\|\|K(u_{\eta^\ast}^\delta)-K(u^\dagger)\|
   +\tfrac{c_r}{2}\|K(u_{\eta^\ast}^\delta)-K(u^\dagger)\|^2 \\
    &\leq (c_m+1)\|w\|\delta+\tfrac{c_r}{2}(1+c_m)^2\delta^2,
\end{aligned}
\end{equation*}
where we have used the triangle inequality and \eqref{eqn:morozov} as follows
\begin{equation*}
\|K(u_{\eta^\ast}^\delta)-K(u^\dagger)\|\le
\|K(u_{\eta^\ast}^\delta)-g^\delta\|+\|K(u^\dagger)-g^\delta\|\le
(1+c_m)\delta.
\end{equation*}
The desired estimate follows immediately from these inequalities.
\end{proof}

\subsection{Two heuristic rules}
Next we apply the proposed approach to deriving a posteriori error estimates for two
heuristic selection rules, i.e., balancing principle
\cite{JinZou:2009,ItoJinTakeuchi:2010} and Hanke-Raus rule
\cite{HankeRaus:1996,EnglHankNeubauer:1996}. These rules were originally developed for
linear inverse problems and recently also for nonsmooth models
\cite{JinLorenz:2010,ItoJinTakeuchi:2010}, but their nonlinear counterparts have not been
studied. The subsequent derivations rely crucially on Lemma \ref{lem:err}, and thus
invoke the second-order sufficient optimality condition \eqref{eqn:suffccond}.

\subsubsection{Balancing principle}

There are several rules known under the name balancing principle \cite[Sect.
2.2]{ItoJinZou:2010}.  The variant under consideration is due to
\cite{ItoJinTakeuchi:2010}, which has been successfully applied to a variety of contexts,
including nonsmooth penalties \cite{ItoJinTakeuchi:2010}. It chooses an optimal
regularization parameter $\eta^\ast$ by minimizing
\begin{equation}\label{eqn:bal}
 \eta^\ast=\arg\min_{\eta\in[0,\|K\|^2]}\frac{F^{1+\gamma}(\eta)}{\eta},
\end{equation}
where $F(\eta)=J_\eta(u_\eta^\delta)$ is the value function, and $\gamma>0$ is a fixed
constant. We refer to \cite{ItoJinTakeuchi:2010} for fine properties of the function
$F(\eta)$. Generically, for the fidelity $\phi(u,g^\delta)$ and penalty $\psi(u)$, it is
equivalent to augmented Tikhonov functional $J(u,\lambda,\tau)$ recently derived from
Bayesian paradigm \cite{JinZou:2009}
\begin{equation*}
    J(u,\lambda,\tau)=\tau\phi(u,g^\delta)+ \lambda\psi(u)+
    \beta_0\lambda-\alpha_0\ln\lambda+\beta_1\tau-\alpha_1 \ln\tau,
\end{equation*}
which maximizes the posteriori probability density $p(u,\tau,\lambda|g^\delta)\propto
p(g^\delta|u,\tau)p(u,\tau,\lambda)$ with the scalars $\tau$ (noise precision) and
$\lambda$ (prior precision) both having Gamma distributions. Here the parameter pairs
$(\alpha_0,\beta_0)$ and $(\alpha_1,\beta_1)$ are closely related to the scale/shape
parameters in the Gamma distributions. The approach determines the regularization
parameter $\eta$ by $\eta=\lambda\tau^{-1}$, and in the case of $\beta_0=\beta_1=0$, the
selected parameter $\eta^\ast$ satisfies rule \eqref{eqn:bal} with the free parameter
$\gamma$ being fixed at the ratio $\frac{\alpha_1}{\alpha_0}$ \cite{ItoJinTakeuchi:2010}.

The name of the rule originates from the fact that the selected parameter $\eta^\ast$
automatically balances the penalty $\psi(u)=\|u\|^2$ with the fidelity
$\phi(u,g^\delta)=\|K(u)-g^\delta\|^2$ through the balancing relation
\begin{equation}\label{eqn:baleq}
  \gamma\eta^\ast\|u_{\eta^\ast}^\delta\|^2 =\|K(u_{\eta^\ast}^\delta)-g^\delta\|^2.
\end{equation}
This relation also shows clearly the role of the parameter $\gamma$ as a balancing
weight.

First, we give an a posteriori error estimate for the approximation
$u_{\eta^\ast}^\delta$.

\begin{thm}\label{thm:bal}
Let the conditions in Lemma \ref{lem:err} be fulfilled, $\eta^\ast$ be determined by rule
\eqref{eqn:bal}, and $\delta_*=\|K(u_{\eta^*}^\delta)-g^\delta\|$ be the realized
residual. Then the following estimate holds
\begin{equation*}
\|u_{\eta^\ast}^\delta-u^\dagger\|\leq
c\left(\epsilon^{-\frac{1}{2}}+\epsilon^{\prime-\frac{1}{2}}\tfrac{F^\frac{\gamma+1}{2}
(\delta)}{F^\frac{\gamma+1}{2}(\eta^\ast)}\right)\max(\delta,\delta_\ast)^\frac{1}{2}.
\end{equation*}
\end{thm}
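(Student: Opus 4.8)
The plan is to combine the two basic error estimates from Lemma~\ref{lem:err} by inserting the intermediate quantity $u_{\eta^\ast}$ (the minimizer for the exact data at the selected parameter $\eta^\ast$), and then use the balancing relation \eqref{eqn:baleq} together with the definition \eqref{eqn:bal} of $\eta^\ast$ to control the various $\sqrt{\eta}$ and $\delta$ factors that appear. First I would split
\begin{equation*}
\|u_{\eta^\ast}^\delta - u^\dagger\| \le \|u_{\eta^\ast}^\delta - u_{\eta^\ast}\| + \|u_{\eta^\ast} - u^\dagger\|,
\end{equation*}
so that Lemma~\ref{lem:err} applies to each term: the propagation term is bounded by $\tfrac{1}{\sqrt{\epsilon' c_s}}\,\delta/\sqrt{\eta^\ast}$ and the approximation term by $\epsilon^{-1/2}\|w\|\,\sqrt{\eta^\ast}/\sqrt{1-c_r\eta^\ast}$. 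So up to constants depending only on $c_r$ (and using $\eta^\ast\le\|K\|^2$ stays in a fixed range, or that $c_r\eta^\ast$ is bounded away from $1$) we get
\begin{equation*}
\|u_{\eta^\ast}^\delta - u^\dagger\| \le c\,\epsilon^{\prime-1/2}\,\frac{\delta}{\sqrt{\eta^\ast}} + c\,\epsilon^{-1/2}\,\|w\|\,\sqrt{\eta^\ast}.
\end{equation*}

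The heart of the argument is to bound $\sqrt{\eta^\ast}$ and $\delta/\sqrt{\eta^\ast}$ by $\max(\delta,\delta_\ast)^{1/2}$ times the stated ratio of value functions. For the second term, the balancing relation \eqref{eqn:baleq} gives $\gamma\eta^\ast\|u_{\eta^\ast}^\delta\|^2 = \delta_\ast^2$, and since $F(\eta^\ast) = J_{\eta^\ast}(u_{\eta^\ast}^\delta) \ge \tfrac{\eta^\ast}{2}\|u_{\eta^\ast}^\delta\|^2$ while also $F(\eta^\ast)\ge\tfrac12\delta_\ast^2$, one extracts a bound of the form $\eta^\ast\|u_{\eta^\ast}^\delta\|^2 \sim \delta_\ast^2$; combined with $F(\eta^\ast)\le F(\eta)$ for the particular comparison value $\eta=\delta$ (which is legitimate since $\eta^\ast$ minimizes $F^{1+\gamma}(\eta)/\eta$), one converts $\sqrt{\eta^\ast}\|w\|$ into $c\,\delta_\ast^{1/2}$ or $c\,\tfrac{F^{(\gamma+1)/2}(\delta)}{F^{(\gamma+1)/2}(\eta^\ast)}\,\delta^{1/2}$, depending on whether $\eta^\ast$ is "large" or "small". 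For the first term $\delta/\sqrt{\eta^\ast}$, I would again use the minimizing property \eqref{eqn:bal}: $\tfrac{F^{1+\gamma}(\eta^\ast)}{\eta^\ast} \le \tfrac{F^{1+\gamma}(\delta)}{\delta}$, so $\tfrac{\delta}{\eta^\ast} \le \tfrac{F^{1+\gamma}(\delta)}{F^{1+\gamma}(\eta^\ast)}$, hence $\tfrac{\delta}{\sqrt{\eta^\ast}} = \sqrt{\tfrac{\delta}{\eta^\ast}}\cdot\sqrt{\delta} \le \tfrac{F^{(\gamma+1)/2}(\delta)}{F^{(\gamma+1)/2}(\eta^\ast)}\,\delta^{1/2}$. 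Collecting the pieces, and bounding everything crudely by $\max(\delta,\delta_\ast)^{1/2}$ times the indicated factors, yields the claimed inequality.

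The main obstacle I anticipate is the bookkeeping in the second term: one has to case-split on the size of $\eta^\ast$ relative to $\delta$ (so that in one case $\sqrt{\eta^\ast}\|w\|$ is absorbed directly into $\delta_\ast^{1/2}$ using \eqref{eqn:baleq} and boundedness of $\|u_{\eta^\ast}^\delta\|$, and in the other case it is controlled via the value-function ratio), and one must make sure the norm $\|u_{\eta^\ast}^\delta\|$ is uniformly bounded — this follows from the minimizing property $J_{\eta^\ast}(u_{\eta^\ast}^\delta)\le J_{\eta^\ast}(u^\dagger)$, which gives $\|u_{\eta^\ast}^\delta\|\le\|u^\dagger\|+c\delta$. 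A secondary point to handle carefully is ensuring $c_r\eta^\ast<1$ (equivalently that $\eta^\ast$ lies in the admissible range where Lemma~\ref{lem:err} is effective); since $\eta^\ast\in[0,\|K\|^2]$ by construction and the regime of interest is $\delta\to0^+$, this can be arranged, with the excluded large-$\eta^\ast$ values handled separately by the trivial bound coming from boundedness of $\|u_{\eta^\ast}^\delta\|$ and $\|u^\dagger\|$. Modulo these routine case distinctions, the estimate drops out of Lemma~\ref{lem:err} and the two defining relations \eqref{eqn:bal}--\eqref{eqn:baleq}.
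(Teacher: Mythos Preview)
Your treatment of the propagation term $\|u_{\eta^\ast}^\delta-u_{\eta^\ast}\|$ is exactly what the paper does: use the minimizing property $F^{1+\gamma}(\eta^\ast)/\eta^\ast\le F^{1+\gamma}(\delta)/\delta$ to obtain $\delta/\sqrt{\eta^\ast}\le \tfrac{F^{(\gamma+1)/2}(\delta)}{F^{(\gamma+1)/2}(\eta^\ast)}\,\delta^{1/2}$, and then apply Lemma~\ref{lem:err}. That part is fine.

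The gap is in the approximation term. You invoke the ready-made estimate $\|u_{\eta^\ast}-u^\dagger\|\le c\,\epsilon^{-1/2}\|w\|\sqrt{\eta^\ast}$ from Lemma~\ref{lem:err} and then try to bound $\sqrt{\eta^\ast}$ by $\max(\delta,\delta_\ast)^{1/2}$. Two problems arise. First, the claim ``$F(\eta^\ast)\le F(\delta)$ since $\eta^\ast$ minimizes $F^{1+\gamma}(\eta)/\eta$'' is incorrect: $F$ is monotone \emph{increasing} in $\eta$, so $F(\eta^\ast)\le F(\delta)$ is equivalent to $\eta^\ast\le\delta$, which is not implied by rule~\eqref{eqn:bal}. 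Second, from the balancing relation $\gamma\eta^\ast\|u_{\eta^\ast}^\delta\|^2=\delta_\ast^2$ you get $\sqrt{\eta^\ast}=\delta_\ast/(\sqrt{\gamma}\,\|u_{\eta^\ast}^\delta\|)$, so converting this into $c\,\delta_\ast^{1/2}$ requires a \emph{lower} bound on $\|u_{\eta^\ast}^\delta\|$, not the upper bound you establish. Such a lower bound is not among the hypotheses, and your case split does not supply it.

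The paper sidesteps the need to control $\sqrt{\eta^\ast}$ altogether. Instead of using the $\sqrt{\eta}$-estimate from Lemma~\ref{lem:err}, it reopens the source-condition argument to get
\[
\tfrac{\epsilon}{2}\|u_{\eta^\ast}-u^\dagger\|^2\le \|w\|\,\|K(u_{\eta^\ast})-g^\dagger\|+\tfrac{c_r}{2}\|K(u_{\eta^\ast})-g^\dagger\|^2,
\]
and then bounds the residual directly via the triangle inequality
\[
\|K(u_{\eta^\ast})-g^\dagger\|\le \|K(u_{\eta^\ast})-K(u_{\eta^\ast}^\delta)\|+\|K(u_{\eta^\ast}^\delta)-g^\delta\|+\|g^\delta-g^\dagger\|\le \tfrac{2\delta}{c_s}+\delta_\ast+\delta,
\]
using the other estimate $\|K(u_{\eta^\ast}^\delta)-K(u_{\eta^\ast})\|\le 2\delta/c_s$ from Lemma~\ref{lem:err} and the definition of $\delta_\ast$. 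This gives $\|u_{\eta^\ast}-u^\dagger\|\le c\,\epsilon^{-1/2}\max(\delta,\delta_\ast)^{1/2}$ with no reference to $\eta^\ast$ at all, matching the form of the stated bound.
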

\begin{proof}
By the triangle inequality, we have the error decomposition
\begin{equation*}
   \|u_\eta^\delta-u^\dagger\|\leq\|u_\eta^\delta-u_\eta\|+\|u_\eta-u^\dagger\|.
\end{equation*}
It suffices to bound the approximation error $\|u_\eta-u^\dagger\|$ and the propagation
error $\|u_\eta^\delta-u_\eta\|$. For the former, we have from the source condition
\eqref{scon} (cf. the proof of Lemma \ref{lem:err}) that
\begin{equation*}
  \begin{aligned}
   \tfrac{1}{2}\|u_{\eta^\ast}-u^\dagger\|^2&\leq-\langle w,K'(u^\dagger)(u_{\eta^\ast}-u^\dagger)\rangle-
   \langle\mu^\dagger,u_{\eta^\ast}-u^\dagger\rangle\\
   &=-\langle w, K(u_{\eta^\ast})-K(u^\dagger)\rangle + \langle w,E(u_{\eta^\ast},u^\dagger)\rangle
   -\langle\mu^\dagger,u_{\eta^\ast}-u^\dagger\rangle.
  \end{aligned}
\end{equation*}
Consequently, we get
\begin{equation*}
\begin{aligned}
   \tfrac{1}{2}\|u_{\eta^\ast}-u^\dagger\|^2-\langle w,E(u_{\eta^\ast},u^\dagger)\rangle+
   \langle\mu^\dagger,u_{\eta^\ast}-u^\dagger\rangle
   \leq\|w\|\|K(u_{\eta^\ast})-g^\dagger\|.
\end{aligned}
\end{equation*}
However, by the triangle inequality and  Lemma \ref{lem:err}, the term
$\|K(u_{\eta^\ast})-g^\dagger\|$ can be estimated by
\begin{equation*}
\begin{aligned}
  \|K(u_{\eta^\ast})-g^\dagger\| &\leq \|K(u_{\eta^\ast})-K(u_{\eta^\ast}^\delta)\|+\|K(u_{\eta^\ast}^\delta)-g^\delta\|+\|g^\delta-g^\dagger\|\\
    &\leq \tfrac{2\delta}{c_s}+\delta^\ast+\delta \leq\tfrac{2+2c_s}{c_s}\max(\delta,\delta_\ast).
\end{aligned}
\end{equation*}
This together with the nonlinearity condition \eqref{ncon} yields
\begin{equation*}
\begin{aligned}
  \tfrac{\epsilon}{2}\|u_{\eta^\ast}-u^\dagger\|^2\leq \|w\|\|K(u_{\eta^\ast})-g^\dagger\| + \tfrac{c_r}{2}\|K(u_{\eta^\ast})-g^\dagger\|^2,
\end{aligned}
\end{equation*}
i.e.,
\begin{equation*}
   \|u_{\eta^\ast}-u^\dagger\|\leq \tfrac{1}{\sqrt{\epsilon}}\left(2\tfrac{\sqrt{1+c_s}}{\sqrt{c_s}}\sqrt{\|w\|}\max(\delta,\delta_\ast)^\frac{1}{2}
    +\tfrac{2+2c_s}{c_s}\sqrt{c_r}\max(\delta,\delta_\ast)\right).
\end{equation*}
Next we estimate the propagation error $\|u_{\eta^\ast}^\delta-u_{\eta^\ast}\|$. The
minimizing property of the selected parameter $\eta^\ast$ implies
\begin{equation*}
\tfrac{1}{\eta^\ast}\leq\tfrac{F^{\gamma+1}(\delta)}{F^{\gamma+1}(\eta^\ast)}\tfrac{1}{\delta}.
\end{equation*}
By Lemma \ref{lem:err}, we have
\begin{equation*}
\begin{aligned}
\|u_{\eta^\ast}^\delta-u_{\eta^\ast}\|&\leq\tfrac{1}{\sqrt{\epsilon'}}\tfrac{1}{\sqrt{c_s\eta^\ast}}\delta
\leq\tfrac{1}{\sqrt{c_s\epsilon'}}\tfrac{F^\frac{\gamma+1}{2}(\delta)}{F^{\frac{\gamma+1}{2}}(\eta^\ast)}\delta^\frac{1}{2}\\
&\leq\tfrac{1}{\sqrt{c_s\epsilon'}}\tfrac{F^\frac{\gamma+1}{2}(\delta)}{F^{\frac{\gamma+1}{2}}(\eta^\ast)}\max(\delta,\delta_\ast)^\frac{1}{2}.
\end{aligned}
\end{equation*}
The desired estimates follows from the choice $c=\max(2\tfrac{\sqrt{1+c_s}}{\sqrt{c_s}
}\sqrt{\|w\|}+\tfrac{2+2c_s}{c_s}\sqrt{c_r}\max(\delta,\delta_\ast)^\frac{1}{2},\tfrac{1}{\sqrt{c_s}})$.
\end{proof}

We note that both $\delta$ and $\delta_*$ are naturally bounded, so the constant $c$ in
Theorem \ref{thm:bal} can be made independent of $\max(\delta,\delta_*)$. The estimate
provides an a posteriori check of the selected parameter $\eta^\ast$: if the realized
residual $\delta_*$ is far smaller than the expected noise level, then the prefactor
$F^{\frac{1+\gamma}{2}}(\delta)F^{-\frac{1+\gamma}{2}}(\eta^\ast)$ might blow up, and
hence, one should be very cautious about the reliability of the approximation
$x_{\eta^\ast}^\delta$. Despite a posteriori error estimates for the principle, its
consistency remains unaddressed, even for linear inverse problems. We make a first
attempt to this issue. First, we show a result on the realized residual $\delta_*$.
\begin{lem}\label{lem:phiconv}
Let the minimizer $\eta^\ast\equiv\eta^\ast(\delta)$ of rule \eqref{eqn:bal} be realized
in $(0,\|K\|^2)$. Then there holds
\begin{equation*}
   \|K(u_{\eta^\ast}^\delta)-g^\delta\|\rightarrow0\quad\mbox{ as } \delta\rightarrow 0.
\end{equation*}
\end{lem}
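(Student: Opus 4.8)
The goal is to show that the realized residual $\delta_*=\|K(u_{\eta^*}^\delta)-g^\delta\|\to0$ as $\delta\to0$. The natural route is to compare the value of the objective $F^{1+\gamma}(\eta)/\eta$ in rule \eqref{eqn:bal} at the minimizer $\eta^*$ against its value at a conveniently chosen competitor. I would pick the competitor $\eta=\delta$ (or more generally $\eta\sim\delta$), for which Theorem \ref{thm:apriori} already gives $\|K(u_\delta^\delta)-g^\delta\|=\mathcal O(\delta^{1/2})$ and $\|u_\delta^\delta-u^\dagger\|=\mathcal O(1)$, hence $F(\delta)=J_\delta(u_\delta^\delta)=\tfrac12\|K(u_\delta^\delta)-g^\delta\|^2+\tfrac\delta2\|u_\delta^\delta\|^2=\mathcal O(\delta)$. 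Minimality of $\eta^*$ then yields
\begin{equation*}
\frac{F^{1+\gamma}(\eta^*)}{\eta^*}\le\frac{F^{1+\gamma}(\delta)}{\delta}=\mathcal O(\delta^{\gamma}),
\end{equation*}
so $F^{1+\gamma}(\eta^*)\le\eta^*\,\mathcal O(\delta^{\gamma})$. Since $\eta^*\le\|K\|^2$ is bounded, this forces $F(\eta^*)\to0$ as $\delta\to0$.

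**From $F(\eta^*)\to0$ to $\delta_*\to0$.** Now $F(\eta^*)=\tfrac12\delta_*^2+\tfrac{\eta^*}{2}\|u_{\eta^*}^\delta\|^2$, and both summands are nonnegative, so $F(\eta^*)\to0$ immediately gives $\delta_*^2\le 2F(\eta^*)\to0$, i.e. $\|K(u_{\eta^*}^\delta)-g^\delta\|\to0$. This is essentially the whole argument; the balancing relation \eqref{eqn:baleq} is not even needed here, only the definition of $F$ and the fact that it is a sum of nonnegative terms. One should double-check the edge case where the infimum in \eqref{eqn:bal} is attained at a boundary point, but the hypothesis of the lemma explicitly assumes $\eta^*\in(0,\|K\|^2)$, so the comparison with the interior competitor $\eta=\delta$ (legitimate once $\delta<\|K\|^2$, which holds for small $\delta$) is valid.

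**The main obstacle.** The only delicate point is justifying $F(\delta)=\mathcal O(\delta)$ rigorously. This needs the bound on $\|u_\delta^\delta\|$: from Theorem \ref{thm:apriori} with $\eta=\delta$ one gets $\|u_\delta^\delta-u^\dagger\|\le\epsilon^{-1/2}(\sqrt{1+2c_r\delta}\,\delta^{1/2}+\tfrac{\sqrt\delta}{\sqrt{1-2c_r\delta}}\|w\|+\sqrt{2\|w\|\delta})=\mathcal O(\delta^{1/2})$, hence $\|u_\delta^\delta\|\le\|u^\dagger\|+\mathcal O(\delta^{1/2})$ is bounded; combined with $\|K(u_\delta^\delta)-g^\delta\|=\mathcal O(\delta)$ from the same theorem this gives $F(\delta)=\tfrac12\mathcal O(\delta^2)+\tfrac\delta2\,\mathcal O(1)=\mathcal O(\delta)$. (Implicitly one must also know such a minimizer exists and the conditions \eqref{scon}, \eqref{ncon}, \eqref{eqn:suffccond} of Lemma \ref{lem:err} are in force, which is the standing assumption.) Everything else is the one-line comparison above followed by nonnegativity of the two pieces of $F(\eta^*)$. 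I expect no serious difficulty; the proof is short.
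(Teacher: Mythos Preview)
Your overall strategy is the same as the paper's: compare the objective $F^{1+\gamma}(\eta)/\eta$ at $\eta^*$ against its value at the competitor $\tilde\eta=\delta$, show $F(\delta)=\mathcal O(\delta)$ so that $F^{1+\gamma}(\delta)/\delta=\mathcal O(\delta^\gamma)\to0$, and then use the boundedness $\eta^*\le\|K\|^2$ to force $F(\eta^*)\to0$ and hence $\delta_*\to0$. Your observation that the balancing relation \eqref{eqn:baleq} is unnecessary is correct: the paper invokes it to write $F(\eta^*)=\tfrac12(1+\gamma^{-1})\delta_*^2$ exactly, but the trivial lower bound $F(\eta^*)\ge\tfrac12\delta_*^2$ you use is enough.

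There is one point where you take a detour and, in doing so, import hypotheses that the lemma does not assume. To bound $F(\delta)$ you appeal to Theorem~\ref{thm:apriori}, which requires the source condition \eqref{scon} and the nonlinearity condition \eqref{ncon}; you then claim these are ``the standing assumption''. They are not---Lemma~\ref{lem:phiconv} is stated without them. The paper's route here is more elementary and avoids the issue entirely: simply compare $u_\delta^\delta$ against the feasible point $u^\dagger\in\mathcal{C}$ to get
\[
F(\delta)=J_\delta(u_\delta^\delta)\le J_\delta(u^\dagger)=\tfrac12\|K(u^\dagger)-g^\delta\|^2+\tfrac\delta2\|u^\dagger\|^2\le\tfrac{\delta^2}{2}+\tfrac\delta2\|u^\dagger\|^2=\mathcal O(\delta),
\]
which needs nothing beyond $\|g^\dagger-g^\delta\|\le\delta$. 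With this one-line replacement your argument matches the paper's and is complete under the lemma's actual hypotheses.
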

\begin{proof}
By virtue of \cite[Thm. 3.1]{ItoJinTakeuchi:2010}, the balancing equation
\eqref{eqn:baleq} is achieved at the local minimizer $\eta^\ast$. Consequently, it
follows from \eqref{eqn:baleq} and the optimality of the selected parameter $\eta^\ast$
that
\begin{equation}\label{eqn:valres}
\frac{[(1+\gamma^{-1})\|K(u_{\eta^\ast}^\delta)-g^\delta\|^2]^{\gamma+1}}{\eta^\ast}=\frac{F^{\gamma+1}(\eta^\ast)}{\eta^\ast}\leq
\frac{F^{\gamma+1}(\tilde{\eta})}{\tilde{\eta}},
\end{equation}
for any $\tilde\eta\in[0,\|K\|^2]$. However, with the choice $\tilde{\eta}=\delta$ and by
the optimality of the minimizer $u_{\tilde{\eta}}^\delta$, we have
\begin{equation*}
   \begin{aligned}
     F(\tilde{\eta})&\equiv\tfrac{1}{2}\|K(u_{\tilde{\eta}}^\delta)-g^\delta\|^2+\tfrac{\tilde\eta}{2}\|u_{\tilde\eta}^\delta\|^2\\
     &\leq \tfrac{1}{2}\|K(u^\dagger)-g^\delta\|^2 + \tfrac{\tilde{\eta}}{2}\|u^\dagger\|^2\\
     &\leq \tfrac{\delta^2}{2}+\delta\|u^\dagger\|^2\sim \delta.
   \end{aligned}
\end{equation*}
Hence, by noting the condition $\gamma>0$ and the a priori bound
$\eta^\ast\in[0,\|K\|^2]$, we deduce that the rightmost term in \eqref{eqn:valres} tends
to zero as $\delta\rightarrow0$. This shows the desired assertion.
\end{proof}

We can now state a consistency result.
\begin{thm}\label{thm:bpcon}
Let there exist some $M>0$ such that $\|u^\dagger\|\leq M$, and the assumption in Lemma
\ref{lem:phiconv} be fulfilled under the constraint $\mathcal{C}=\{u\in X: \|u\|\leq
M\}$. If the operator $K$ is weakly closed and injective, then the sequence
$\{u_{\eta^\ast(\delta )}^\delta\}_{\delta}$ of solutions converges weakly to
$u^\dagger$.
\end{thm}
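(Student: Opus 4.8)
The plan is to establish weak subsequential compactness of the family $\{u_{\eta^\ast(\delta)}^\delta\}_\delta$, identify every weak limit point with $u^\dagger$ using the weak closedness and injectivity of $K$, and then upgrade subsequential convergence to full convergence. First I would note that, because the minimization in \eqref{eqn:bal} is performed over $\mathcal{C}=\{u\in X:\|u\|\le M\}$, every minimizer $u_{\eta^\ast(\delta)}^\delta$ lies in this ball, so $\|u_{\eta^\ast(\delta)}^\delta\|\le M$ for all $\delta$. Hence the family is bounded in the Hilbert space $X$, and by the Banach--Alaoglu / Eberlein--\v{S}mulian theorem every sequence $\delta_n\to 0$ admits a subsequence (not relabeled) along which $u_{\eta^\ast(\delta_n)}^{\delta_n}\rightharpoonup \bar u$ for some $\bar u\in\mathcal{C}$ (the ball is weakly closed).

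Next I would pin down $\bar u$. The key input is Lemma \ref{lem:phiconv}, which gives $\|K(u_{\eta^\ast(\delta_n)}^{\delta_n})-g^{\delta_n}\|\to 0$. Combined with $\|g^{\delta_n}-g^\dagger\|=\delta_n\to 0$, the triangle inequality yields $K(u_{\eta^\ast(\delta_n)}^{\delta_n})\to g^\dagger=K(u^\dagger)$ strongly in $H$. Since $u_{\eta^\ast(\delta_n)}^{\delta_n}\rightharpoonup\bar u$ and $K(u_{\eta^\ast(\delta_n)}^{\delta_n})\to K(u^\dagger)$, the weak closedness of $K$ (i.e.\ its graph is weakly--strongly closed) forces $K(\bar u)=K(u^\dagger)$. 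Injectivity of $K$ then gives $\bar u=u^\dagger$. Because this holds for every weakly convergent subsequence extracted from an arbitrary sequence $\delta_n\to 0$, a standard subsequence-of-subsequence argument shows that the whole family converges weakly: $u_{\eta^\ast(\delta)}^\delta\rightharpoonup u^\dagger$ as $\delta\to 0$.

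I would present it roughly as follows. Take an arbitrary sequence $\delta_n\to0^+$ and write $u_n:=u_{\eta^\ast(\delta_n)}^{\delta_n}$. Since $\|u_n\|\le M$, pass to a weakly convergent subsequence $u_{n_k}\rightharpoonup\bar u\in\mathcal{C}$. By Lemma \ref{lem:phiconv}, $\|K(u_{n_k})-g^{\delta_{n_k}}\|\to0$, and since $\|g^{\delta_{n_k}}-g^\dagger\|=\delta_{n_k}\to0$ we get $K(u_{n_k})\to g^\dagger$ in $H$. Weak closedness of $K$ yields $K(\bar u)=g^\dagger=K(u^\dagger)$, and injectivity yields $\bar u=u^\dagger$. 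As the limit is independent of the chosen subsequence, the full sequence $u_n\rightharpoonup u^\dagger$; and since $\delta_n\to0^+$ was arbitrary, $u_{\eta^\ast(\delta)}^\delta\rightharpoonup u^\dagger$ as $\delta\to0^+$, which is the claim.

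The only subtle point — and the place where the hypotheses of the theorem are genuinely used — is that Lemma \ref{lem:phiconv} already guarantees $\delta_*=\|K(u_{\eta^\ast}^\delta)-g^\delta\|\to0$ without any source or nonlinearity condition, so no appeal to Lemma \ref{lem:err} or to \eqref{scon}/\eqref{ncon} is needed here; the whole argument rests on boundedness (from the constraint $\|u\|\le M$), Lemma \ref{lem:phiconv}, and the qualitative properties of $K$. I do not anticipate a serious obstacle: the one thing to be careful about is that Lemma \ref{lem:phiconv} requires the minimizer $\eta^\ast(\delta)$ to be realized in the open interval $(0,\|K\|^2)$, which is exactly the standing assumption carried over in the statement, so the residual convergence is available. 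Everything else is the routine ``weak compactness plus weak closedness plus injectivity'' template for consistency of regularization methods, and the subsequence principle delivers convergence of the entire family rather than just a subsequence.
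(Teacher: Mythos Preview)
Your argument is correct and follows essentially the same route as the paper's own proof: boundedness from the constraint $\mathcal{C}=\{u:\|u\|\le M\}$, residual convergence from Lemma~\ref{lem:phiconv} combined with $\delta\to0$ to get $K(u_{\eta^\ast}^\delta)\to g^\dagger$, then weak closedness and injectivity of $K$ to identify every weak limit with $u^\dagger$, and finally the subsequence-of-subsequence principle. The paper phrases the identification step via weak lower semicontinuity of the norm rather than directly invoking strong convergence of $K(u_n)$, but this is a cosmetic difference only.
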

\begin{proof}
Lemma \ref{lem:phiconv} implies $\|K(u_{\eta^\ast}^\delta)-g^\delta\| \rightarrow0$ as
$\delta\rightarrow0$. The a priori bound $\|u_{\eta^*}^\delta\|\leq M$ from the
constraint $\mathcal{C}$ implies the existence of a subsequence of
$\{u_{\eta^\ast}^\delta\}$, also denoted by $\{u_{\eta^\ast}^\delta\}$, and some $u^\ast$
such that $u_{\eta^\ast}^\delta\rightarrow u^\ast$ weakly. However, the weak closedness
of the operator $K$ and weak lower semi-continuity of norms, yield
$\|K(u^\ast)-g^\dagger\|=0$. Hence, $K(u^\ast)=g^\dagger$, which together with the
injectivity of the operator $K$ implies $u^\ast=u^\dagger$. Since every subsequence has a
subsequence converging weakly to $u^\dagger$, the whole sequence converges weakly to
$u^\dagger$. This concludes the proof of the theorem.
\end{proof}

Hence, the balancing principle is consistent provided that there exists a known upper
bound on the solution $u^\dagger$, which is often available from physical considerations.
This provides partial justification of its promising empirical results
\cite{ItoJinTakeuchi:2010}. In view of the uniform bound on the sequence
$\{\eta^*(\delta)\}$ in the defining relation \eqref{eqn:bal}, $\{\eta^*(\delta)\}$
naturally contains a convergent subsequence. However, it remains unclear whether the
(sub)sequence $\{\eta^\ast(\delta)\}$ will also tend to zero as the noise level $\delta$
vanishes.

\subsubsection{Hanke-Raus rule}

The Hanke-Raus rule \cite{HankeRaus:1996,EnglHankNeubauer:1996} is based on error
estimation: the squared residual $\|K(u_\eta^\delta)-g^\delta\|^2$ divided by the
regularization parameter $\eta$ behaves like an estimate for the total error (cf. Theorem
\ref{thm:apriori}). Hence, it chooses an optimal regularization parameter $\eta$ by
\begin{equation}\label{eqn:hankeraus}
  \eta^\ast=\arg\min_{\eta\in[0,\|K\|^2]}\frac{\|K(u_\eta^\delta)-g^\delta\|^2}{\eta}.
\end{equation}

We have the following a posteriori error estimate for the rule \eqref{eqn:hankeraus}.
\begin{thm}\label{thm:hr}
Let the conditions in Lemma \ref{lem:err} be fulfilled, $\eta^\ast$ be determined by rule
\eqref{eqn:hankeraus}, and $\delta_\ast=\|K(u_{\eta^\ast}^\delta) -g^\delta\|\neq0$ be
the realized residual. Then for any small noise level $\delta$, there holds
\begin{equation*}
   \|u_{\eta^\ast}^\delta-u^\dagger\|\leq c\left(\epsilon^{-\frac{1}{2}}\|w\|^\frac{1}{2}+{\epsilon'}^{-\frac{1}{2}}
   \tfrac{\delta}{\delta_\ast}\right)\max(\delta,\delta^\ast)^\frac{1}{2}.
\end{equation*}
\end{thm}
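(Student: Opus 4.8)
The plan is to follow the template of the proof of Theorem~\ref{thm:bal}: decompose
\[
\|u_{\eta^\ast}^\delta-u^\dagger\|\le\|u_{\eta^\ast}^\delta-u_{\eta^\ast}\|+\|u_{\eta^\ast}-u^\dagger\|
\]
and bound the approximation error $\|u_{\eta^\ast}-u^\dagger\|$ and the propagation error $\|u_{\eta^\ast}^\delta-u_{\eta^\ast}\|$ separately. The only genuinely new ingredient is a lower bound on $\eta^\ast$ extracted from the minimality property defining rule~\eqref{eqn:hankeraus}.

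For the approximation error I would simply re-run the argument already used in Theorem~\ref{thm:bal}: the minimizing property of $u_{\eta^\ast}$ for the exact data together with the source condition~\eqref{scon}, rewritten through the second-order error $E(u_{\eta^\ast},u^\dagger)$, gives
\[
\tfrac12\|u_{\eta^\ast}-u^\dagger\|^2-\langle w,E(u_{\eta^\ast},u^\dagger)\rangle+\langle\mu^\dagger,u_{\eta^\ast}-u^\dagger\rangle\le\|w\|\,\|K(u_{\eta^\ast})-g^\dagger\|,
\]
and then the nonlinearity condition~\eqref{ncon} yields $\tfrac\epsilon2\|u_{\eta^\ast}-u^\dagger\|^2\le\|w\|\,\|K(u_{\eta^\ast})-g^\dagger\|+\tfrac{c_r}2\|K(u_{\eta^\ast})-g^\dagger\|^2$. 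Inserting $u_{\eta^\ast}^\delta$ and $g^\delta$ and invoking $\|K(u_{\eta^\ast}^\delta)-K(u_{\eta^\ast})\|\le 2\delta/c_s$ from Lemma~\ref{lem:err} shows $\|K(u_{\eta^\ast})-g^\dagger\|\le c\max(\delta,\delta_\ast)$, whence $\|u_{\eta^\ast}-u^\dagger\|\le c\,\epsilon^{-1/2}\|w\|^{1/2}\max(\delta,\delta_\ast)^{1/2}$, the $c_r$-contribution being absorbed into the generic constant since the noise level is bounded (exactly as at the end of Theorem~\ref{thm:bal}).

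The crux is the propagation error. Lemma~\ref{lem:err} gives $\|u_{\eta^\ast}^\delta-u_{\eta^\ast}\|\le(\epsilon'c_s)^{-1/2}\,\delta/\sqrt{\eta^\ast}$, so it remains to bound $\eta^\ast$ from below. By the minimality of $\eta^\ast$ in~\eqref{eqn:hankeraus}, for any admissible $\tilde\eta\in(0,\|K\|^2]$,
\[
\frac{\delta_\ast^2}{\eta^\ast}=\frac{\|K(u_{\eta^\ast}^\delta)-g^\delta\|^2}{\eta^\ast}\le\frac{\|K(u_{\tilde\eta}^\delta)-g^\delta\|^2}{\tilde\eta}.
\]
I would take $\tilde\eta=\delta$ (admissible once $\delta\le\|K\|^2$ and $c_r\delta<1$, which is why one restricts to small $\delta$) and estimate the right-hand side by Lemma~\ref{lem:err} applied \emph{at $\tilde\eta$}: the triangle inequality with the residual bounds $\|K(u_{\tilde\eta})-g^\dagger\|\le 2\tilde\eta\|w\|/(1-c_r\tilde\eta)$ and $\|K(u_{\tilde\eta}^\delta)-K(u_{\tilde\eta})\|\le 2\delta/c_s$ gives $\|K(u_{\tilde\eta}^\delta)-g^\delta\|\le c(\delta+\tilde\eta\|w\|)$, hence $\|K(u_{\tilde\eta}^\delta)-g^\delta\|^2/\tilde\eta\le c(\delta^2/\tilde\eta+\tilde\eta\|w\|^2)\le c\delta$ for $\tilde\eta=\delta$. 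Thus $\delta_\ast^2/\eta^\ast\le c\delta$, i.e. $1/\sqrt{\eta^\ast}\le c\sqrt\delta/\delta_\ast$, and therefore $\|u_{\eta^\ast}^\delta-u_{\eta^\ast}\|\le c\,{\epsilon'}^{-1/2}\,\delta^{3/2}/\delta_\ast=c\,{\epsilon'}^{-1/2}(\delta/\delta_\ast)\,\delta^{1/2}\le c\,{\epsilon'}^{-1/2}(\delta/\delta_\ast)\max(\delta,\delta_\ast)^{1/2}$. Adding the two estimates and taking a generic $c$ gives the claim.

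The main obstacle is this last step. Unlike the balancing principle, where Lemma~\ref{lem:phiconv} only needs the crude bound $\|K(u^\dagger)-g^\delta\|\le\delta$ thanks to the extra exponent $1+\gamma$, here the sharper $\delta^{1/2}$ scaling in the Hanke--Raus estimate forces one to control $\|K(u_{\tilde\eta}^\delta)-g^\delta\|$ to the higher order $\delta+\tilde\eta\|w\|$, which is precisely what the approximation- and propagation-residual bounds of Lemma~\ref{lem:err} deliver. Balancing the admissibility of $\tilde\eta$ (whence "for small noise level $\delta$") against keeping the ratio $\|K(u_{\tilde\eta}^\delta)-g^\delta\|^2/\tilde\eta$ of order $\delta$ is the delicate point, and $\delta_\ast\neq0$ is exactly what makes the final division by $\delta_\ast$ legitimate. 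Everything else is a routine combination of the triangle inequality, Cauchy--Schwarz and Young's inequalities, and the already-established Lemma~\ref{lem:err}.
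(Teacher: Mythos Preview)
Your proposal is correct and follows essentially the same route as the paper: the same triangle-inequality decomposition, the same reuse of the Theorem~\ref{thm:bal} argument for the approximation error, and the same key step of testing the minimality in~\eqref{eqn:hankeraus} at $\tilde\eta=\delta$ and bounding $\|K(u_{\tilde\eta}^\delta)-g^\delta\|$ via the two residual estimates of Lemma~\ref{lem:err}. The paper keeps the constants slightly more explicit (writing out $\tfrac{2+c_s}{c_s}+\tfrac{2}{1-2c_r\delta}\|w\|$ rather than a generic $c$), but the logic is identical.
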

\begin{proof}
As in the proof of Theorem \ref{thm:bal}, we deduce that the error
$\|u_{\eta^\ast}-u^\dagger\|$ satisfies
\begin{equation*}
        \|u_{\eta^\ast}-u^\dagger\|\leq \tfrac{1}{\sqrt{\epsilon}}
        \left(2\tfrac{\sqrt{1+c_s}}{\sqrt{c_s}}\sqrt{\|w\|}\max(\delta,\delta_\ast)^\frac{1}{2}
    +\tfrac{2+2c_s}{c_s}\sqrt{c_r}\max(\delta,\delta_\ast)\right).
\end{equation*}

Next we estimate the error $\|u_{\eta^\ast}^\delta-u_{\eta^\ast}\|$. The definitions of
$\eta^\ast$ and $\delta_\ast$ indicate
\begin{equation*}
  \frac{\delta_\ast^2}{\eta^\ast}\leq\frac{\|K(u_{\tilde{\eta}}^\delta)-g^\delta\|^2}{\tilde{\eta}}
\end{equation*}
for any $\tilde{\eta}\in[0,\|K\|^2]$. By taking $\tilde{\eta}=\delta$ in the inequality
and noting Lemma \ref{lem:err}, we deduce
\begin{equation*}
  \begin{aligned}
    (\eta^\ast)^{-1}&\leq \delta_\ast^{-2}\delta^{-1}\|K(u_\delta^\delta)-g^\delta\|^2\\
      &\leq\delta_\ast^{-2}\delta^{-1}\left(\|K(u_\delta^\delta)-K(u_\delta)\|+\|K(u_\delta)-g^\dagger\|+\|g^\dagger-g^\delta\|\right)^2\\
      &\leq\delta_\ast^{-2}\delta^{-1}\left(\tfrac{2\delta}{c_s}+\tfrac{2\delta}{1-2c_r\delta}\|w\|+\delta\right)^2\\
      &=\left(\tfrac{2+c_s}{c_s}+\tfrac{2}{1-2c_r\delta}\|w\|\right)^2\delta\delta_\ast^{-2}.
  \end{aligned}
\end{equation*}
Using again Lemma \ref{lem:err}, we arrive at the following estimate
\begin{equation*}
  \begin{aligned}
       \|u_{\eta^\ast}^\delta-u_{\eta^\ast}\|&\leq\tfrac{1}{\sqrt{c_s\epsilon'}}\tfrac{1}{\sqrt{\eta^\ast}}\delta\\
       &\leq\tfrac{1}{\sqrt{c_s\epsilon'}}\tfrac{\delta}{\delta_\ast}\left(\tfrac{2+c_s}{
       c_s}+\tfrac{2}{1-2c_r\delta}\|w\|\right)\delta^\frac{1}{2}\\
       &\leq\tfrac{1}{\sqrt{c_s\epsilon'}}\tfrac{\delta}{\delta_\ast}\left(\tfrac{2+c_s}{
       c_s}+\tfrac{2}{1-2c_r\delta}\|w\|\right)\max(\delta,\delta_\ast)^\frac{1}{2}.
  \end{aligned}
\end{equation*}
After setting $c=\max(2\tfrac{\sqrt{1+c_s}}{\sqrt{c_s}}\sqrt{\|w\|} +\tfrac{2+2c_s}{c_s}
\sqrt{c_r}\max(\delta,\delta_\ast)^\frac{1}{2},\tfrac{1}{\sqrt{c_s}}
(\frac{2+c_s}{c_s}+\tfrac{2}{1-2c_r\delta}\|w\|))$, the desired assertion follows from
these two estimates and the triangle inequality.
\end{proof}

\section{A class of nonlinear parameter identification problems}\label{sec:class}

Now we revisit the source condition \eqref{scon} and nonlinearity condition \eqref{ncon}
for a general class of nonlinear parameter identification problems. The features of the
source and nonlinearity conditions are illuminated by utilizing the specific structure of
the adjoint operator $K'(u^\dagger)^\ast$. Then we specialize to problems with bilinear
structures, and show the unnecessity of the source representer $w$ for (numerically)
evaluating the nonlinearity term $\langle w,E(u,u^\dagger)\rangle$. Here we shall focus
on derivations in an abstract setting, and refer to Section \ref{sec:exam} for concrete
examples.

\subsection{A general class of parameter identification problems}
Generically, nonlinear parameter identification problems can be described by
\begin{equation*}
  \left\{\begin{array}{ll}
     e(u,y) = 0,\\
     K(u) = Cy(u),
  \end{array}\right.
\end{equation*}
where $e(u,y):X\times Y\rightarrow Y^*$ denotes a (differential) operator which is
differentiable with respect to both arguments $u$ and $y$, and the derivative $e_y$ is
assumed to be invertible. The notation $y(u)\in Y$ refers to the unique solution to the
operator equation $e(u,y)=0$ for a given $u$, and the operator $C$ is linear and bounded.
Typically, the operator $C$ represents an embedding or trace operator.

To make the source condition \eqref{scon} more precise and tangible, we compute the
derivative $K'(u)\delta u$ (with the help of the implicit function theorem) and the
adjoint operator $K'(u)^\ast$. Observe that the derivative $y'(u)\delta u$ of the
solution $y(u)$ with respect to $u$ in the direction $\delta u$ satisfies
\begin{equation*}
   e_u(u,y(u))\delta u + e_y(u,y(u))y'(u)\delta u=0,
\end{equation*}
from which follows the derivative formula
\begin{equation*}
  y'(u)\delta u = -(e_y(u,y(u)))^{-1}e_u(u,y(u))\delta u.
\end{equation*}
Consequently, we arrive at the following explicit representation
\begin{equation*}
   K'(u)\delta u = -C(e_y(u,y(u)))^{-1}e_u(u,y(u))\delta u.
\end{equation*}
Obviously, the adjoint operator $K'(u)^*$ is given by
\begin{equation*}
   K'(u)^\ast w = - e_u(u,y(u))^\ast(e_y(u,y(u)))^{-\ast}C^\ast w.
\end{equation*}

With the expression for the adjoint operator $K'(u)^\ast$, the source condition
\eqref{scon}, i.e., $K'(u^\dagger)^\ast w = u^\dagger-\mu^\dagger$, can be expressed more
explicitly as
\begin{equation*}
   - e_u(u^\dagger,y(u^\dagger))^\ast(e_y(u^\dagger,y(u^\dagger)))^{-\ast}C^\ast w = u^\dagger-\mu^\dagger.
\end{equation*}
This identity remains valid by setting $ \rho = -(e_y(u^\dagger,
y(u^\dagger)))^{-*}C^\ast w$. In other words, instead of the source condition
\eqref{scon}, we require the existence of $\rho\in Y$ and $\mu^\dagger$ such that
\begin{equation}\label{nscon}
   e_u(u^\dagger,y(u^\dagger))^\ast \rho = u^\dagger - \mu^\dagger,
\end{equation}
and $\langle \mu^\dagger,u-u^\dagger\rangle\geq0$ for any $u\in \mathcal{C}$. This
identity represents an alternative (new) source condition. A distinct feature of this
approach is that potentially less regularity is imposed on $\rho$, instead of on $w$.
This follows from the observation that the existence of $\rho\in Y$ does not necessarily
guarantee the existence of an element $w\in H$ satisfying $\rho=-(e_y(u^\dagger,
y(u^\dagger)))^{-*}C^\ast w$ due to possibly extra smoothing property of the operator
$(e_y(u^\dagger,y(u^\dagger)))^{-*}$, see Example \ref{exam:robin} for an illustration.
Conversely, the existence of $w$ always implies the existence of $\rho$ satisfying the
new source condition \eqref{nscon}. Therefore, it opens an avenue to relax the regularity
requirement of the source representer. Such a source condition underlies the main idea of
the interesting approach in \cite{EnglZou:2000} for a parabolic inverse problem.

Under the source condition \eqref{nscon}, we have
\begin{equation*}
   \begin{aligned}
      \langle w, E(u,u^\dagger)\rangle & = \langle w, Cy(u)-Cy(u^\dagger)-Cy'(u^\dagger)(u-u^\dagger)\rangle\\
          & = \langle C^\ast w, y(u)-y(u^\dagger)-y'(u^\dagger)(u-u^\dagger)\rangle\\
          & = \langle (e_{y}(u^\dagger,y(u^\dagger)))^{-\ast}C^\ast w,e_y(u^\dagger,y(u^\dagger))
          (y(u)-y(u^\dagger)-y'(u^\dagger)(u-u^\dagger))\rangle\\
          & = -\langle \rho,e_y(u^\dagger,y(u^\dagger))
          (y(u)-y(u^\dagger)-y'(u^\dagger)(u-u^\dagger))\rangle.
   \end{aligned}
\end{equation*}
Accordingly, the nonlinearity condition \eqref{ncon} can be expressed by
\begin{equation} \label{nncon}
  \begin{aligned}
     \tfrac{c_r}{2}\|K(u)-K(u^\dagger)\|^2
     +\langle \rho, e_y(u^\dagger,y(u^\dagger))&(y(u)-y(u^\dagger)-y'(u^\dagger)(u-u^\dagger))\rangle\\
     \quad\quad &+\tfrac{1}{2}\|u-u^\dagger\|^2 +\langle \mu^\dagger,u-u^\dagger\rangle
     \geq\tfrac{\epsilon}{2}\|u-u^\dagger\|^2\quad \forall u\in\mathcal{C}.
  \end{aligned}
\end{equation}
Therefore, the term $\langle \rho, e_y(u^\dagger,y(u^\dagger)) (y(u)-y(u^\dagger)-
y'(u^\dagger)(u-u^\dagger))\rangle$ will play an important role in studying the degree of
nonlinearity of the operator $K$, and in analyzing related Tikhonov regularization
methods. We shall illustrate its usage in Example \ref{exam:eit}. We would like to point
out that the nonlinearity condition \eqref{nncon} can be regarded as the (weak) limit of
the second-order sufficient condition \eqref{eqn:suffccond}, which in the current context
reads
\begin{equation*}
  \begin{aligned}
     \langle e_y(u_\eta,&y(u_\eta))^{-*}C^\ast(K(u_\eta)-g^\dagger),
     e_y(u_\eta,y(u_\eta))(y(u)-y(u_\eta)-y'(u_\eta)
     (u-u_\eta))\rangle\\
      &+\tfrac{1}{2}\|K(u_\eta)-K(u)\|^2+\tfrac{\eta}{2}\|u_\eta-u\|^2 +\eta\langle\mu_\eta,u-u_\eta\rangle
     \geq\tfrac{c_s}{2}\|K(u_\eta)-K(u)\|^2+\tfrac{\epsilon'\eta}{2}\|u-u_\eta\|^2.
  \end{aligned}
\end{equation*}

The source condition \eqref{nscon} together with the nonlinearity condition \eqref{nncon}
can yield identical convergence rates for nonlinear Tikhonov models as conditions
\eqref{scon} and \eqref{ncon}, since conditions \eqref{nscon} and \eqref{nncon} are
exactly the representations of conditions \eqref{scon} and \eqref{ncon} in the context of
parameter identifications. The main changes to the proofs are the following two key
identities
\begin{equation*}
   \begin{aligned}
      \langle w,K'(u^\dagger)(u-u^\dagger)\rangle&=\langle\rho, e_u(u^\dagger,y(u^\dagger))
      (u-u^\dagger)\rangle(=\langle u^\dagger-\mu^\dagger,u-u^\dagger\rangle),\\
      \langle w,K(u)-K(u^\dagger)\rangle & = - \langle \rho, e_y(u^\dagger,y(u^\dagger))(y(u)-y(u^\dagger))\rangle,
   \end{aligned}
\end{equation*}
and the remaining steps proceed identically.

In the rest, we further specialize to the case where the operator equation $e(u,y)=0$
assumes the form
\begin{equation*}
   A(u)y - f=0.
\end{equation*}
A lot of parameter identification problems for linear partial differential equations
(systems) can be cast into this abstract model, e.g., the second-order elliptic operator
$A(u)y=-\nabla\cdot(a(x)\nabla y)+\mathbf{b}(x)\cdot\nabla y + c(x)y$ with the parameter
$u$ being one or some combinations of $a(x),\,\mathbf{b}(x)$ and $c(x)$. Then upon
denoting the derivative of $A(u)$ with respect to $u$ by $A'(u)$, we have
\begin{equation*}
   e_u(u,y(u))\delta u = A'(u)\delta u y(u)
\end{equation*}
and
\begin{equation*}
   e_y(u,y(u)) = A(u).
\end{equation*}
The derivative $A'(u)\delta u y(u)$ can be either local (separable) or nonlocal. For
example, in the former category, $A(u)y=(-\Delta + u)y$ with $A'(u)\delta u
y(u)=y(u)\delta u$. The case $A(u)y=-\nabla\cdot(u\nabla y)$ with $A'(u)\delta u
y(u)=-\nabla\cdot(\delta u\nabla y(u))$ belongs to the latter category. The local case
will be further discussed in Section \ref{subsec:bilin}. Consequently, the (new) source
and nonlinearity conditions respectively simplify to
\begin{equation*}
   e_u(u^\dagger,y(u^\dagger))^\ast \rho = u^\dagger - \mu^\dagger
\end{equation*}
and
\begin{equation*}
  \begin{aligned}
     \tfrac{c_r}{2}\|K(u)-K(u^\dagger)\|^2
     +\langle \rho, A(u^\dagger)&(y(u)-y(u^\dagger)-y'(u^\dagger)(u-u^\dagger))\rangle\\
     \quad\quad &+\tfrac{1}{2}\|u-u^\dagger\|^2 +\langle \mu^\dagger,u-u^\dagger\rangle
     \geq\tfrac{\epsilon}{2}\|u-u^\dagger\|^2\quad \forall u\in\mathcal{C}.
  \end{aligned}
\end{equation*}

\subsection{Bilinear problems}\label{subsec:bilin}

Here we elaborate the structure of the crucial nonlinearity term $\langle
w,E(u,u^\dagger)\rangle$ in the proposed nonlinearity condition \eqref{ncon}.
Interestingly, it admits a representation without resorting to the source representer $w$
for bilinear problems. Specifically, the following class of inverse problems is
considered. Let the operator $e(u,y)$ be (affine) bilinear with respect to the arguments
$u$ and $y$ for fixed $y$ and $u$, respectively, and for a given $u$, $e_u(u,y)$ is
defined pointwise (local/separable).

We begin with the second-order error $E(u,u^\dagger)$ for bilinear problems. The bilinear
structure of the operator $e(u,y)$ implies
\begin{equation*}
  \begin{aligned}
     0 & = e(u,y(u)) - e(u^\dagger,y(u^\dagger))\\
       & = e_y(u^\dagger,y(u^\dagger))(y(u)-y(u^\dagger))+ e_u(u,y(u))(u-u^\dagger),
  \end{aligned}
\end{equation*}
i.e.,
\begin{equation*}
y(u)-y(u^\dagger) = - (e_y(u^\dagger,y(u^\dagger)))^{-1}e_u(u,y(u))(u-u^\dagger).
\end{equation*}
Therefore, we deduce that
\begin{equation*}
  \begin{aligned}
     E(u,u^\dagger) & = K(u)-K(u^\dagger) - K'(u^\dagger)(u-u^\dagger)\\
       & = Cy(u) - Cy(u^\dagger) + C(e_y(u^\dagger,y(u^\dagger)))^{-1}
             e_u(u^\dagger,y(u^\dagger))(u-u^\dagger),\\
       & = -C(e_y(u^\dagger,y(u^\dagger)))^{-1}e_u(u,y(u^\dagger))(u-u^\dagger)+
            C(e_y(u^\dagger,y(u^\dagger)))^{-1}e_u(u^\dagger,y(u^\dagger))(u-u^\dagger)\\
       & = -C(e_y(u^\dagger,y(u^\dagger)))^{-1}(e_u(u,y(u))
           -e_u(u^\dagger,y(u^\dagger)))(u-u^\dagger).
  \end{aligned}
\end{equation*}

With the help of the preceding three relations, the source condition $K'(u^\dagger)^\ast
w = u^\dagger-\mu^\dagger$ and locality (separability) of $e_u(u,y(u))$, we get
\begin{equation*}
  \begin{aligned}
   \langle w,E(u,u^\dagger)\rangle &= \langle w, -C(e_y(u^\dagger,y(u^\dagger)))^{-1}
         (e_u(u,y(u))-e_u(u^\dagger,y(u^\dagger)))(u-u^\dagger)\rangle\\
    & = \left\langle -e_u(u^\dagger,y(u^\dagger))^\ast(e_y(u^\dagger,y(u^\dagger)))^{-\ast}
         C^\ast w,\frac{e_u(u,y(u))-e_u(u^\dagger,y(u^\dagger))}{
         e_u(u^\dagger,y(u^\dagger))}(u-u^\dagger)\right\rangle\\
    & = \left\langle K'(u^\dagger)^\ast w,
        \frac{e_u(u,y(u))-e_u(u^\dagger,y(u^\dagger))
        }{e_u(u^\dagger,y(u^\dagger))}(u-u^\dagger)\right\rangle\\
    & = \left\langle u^\dagger - \mu^\dagger,
        \frac{e_u(u,y(u))-e_u(u^\dagger,y(u^\dagger))}{
        e_u(u^\dagger,y(u^\dagger))}(u-u^\dagger)\right\rangle.
  \end{aligned}
\end{equation*}
Therefore, we have arrived at the following concise representation
\begin{equation}\label{eqn:wrepres}
   \langle w,E(u,u^\dagger)\rangle = \left\langle u^\dagger - \mu^\dagger,
    \frac{e_u(u,y(u))-e_u(u^\dagger,y(u^\dagger))
    }{e_u(u^\dagger,y(u^\dagger))}(u-u^\dagger)\right\rangle.
\end{equation}

A remarkable observation of the derivations is that the source representer $w$ actually
is not needed for evaluating $\langle w,E(u,u^\dagger)\rangle$, which enables possible
numerical verification of the nonlinearity condition \eqref{ncon}. Note that even if we
do know the exact solution $u^\dagger$, the representer $w$ is still not directly
accessible since the operator equation $K'(u^\dagger)^*w =u^\dagger$ is generally also
ill-posed. Hence, the representation \eqref{eqn:wrepres} is of much practical
significance. Another important consequence is that it may enable estimates of type
\eqref{cond:clsnon2}, thereby validating the nonlinearity condition \eqref{ncon}. This
can be achieved by applying H\"{o}lder-type inequality if the image of $K(u)$ and the
coefficient $u$ share the domain of definition, e.g., in recovering the potential/leading
coefficient in an elliptic equation from distributed measurements in the domain, see
Example \ref{exam:eit} for an illustration.

Finally, we point out that formally the representation \eqref{eqn:wrepres} can be
regarded as the limit of
\begin{equation*}
  \left\langle u_\eta-\mu_\eta,\frac{e_u(u,y(u))-
    e_u(u_\eta,y(u_\eta))}{e_u(u_\eta,y(u_\eta))}(u-u_\eta)\right\rangle
\end{equation*}
as $\eta$ goes to zero, which might be computationally amenable, and hence enable
possible numerical verification of the second-order sufficient condition
\eqref{eqn:suffccond}.

\section{Examples}\label{sec:exam}

In this section, we illuminate the nonlinearity condition \eqref{ncon} and the structure
of the term $\langle w,E(u,u^\dagger)\rangle$ with examples, and discuss the usage of the
source and nonlinearity conditions \eqref{nscon} and \eqref{nncon}.

First, we give a one-dimensional example where the smallness assumption ($L\|w\|<1$) in
the classical condition \eqref{cond:clsnon} is violated while the proposed nonlinearity
condition \eqref{ncon} is always true.

\begin{exam}
Let the nonlinear operator $K:\mathbb{R}\rightarrow\mathbb{R}$ be given by
\begin{equation*}
 K(u)=\epsilon u(1-u),
\end{equation*}
where $\epsilon>0$. The solution depends sensitively on $u$ if $\epsilon$ is very small,
hence it mimics the ill-posed behavior of inverse problems. Let the exact data
$g^\dagger$ $($necessarily smaller than $\frac{\epsilon}{4}$$)$ be given, then the
minimum-norm solution $u^\dagger$ is given by
\begin{equation*}
u^\dagger = \tfrac{1}{2}\left(1-\sqrt{1-4\epsilon^{-1}g^\dagger}\right).
\end{equation*}
It is easy to verify that
\begin{equation*}
K'(u^\dagger) = \epsilon(1-2u^\dagger),
\end{equation*}
and
\begin{equation*}
\begin{aligned}
 E(u,u^\dagger)&=K(u)-K(u^\dagger)-K'(u^\dagger)(u-u^\dagger)\\
 &=\epsilon u(1-u)-\epsilon u^\dagger(1-u^\dagger)-\epsilon(1-2u^\dagger)(u-u^\dagger)\\
 &=-\epsilon(u-u^\dagger)^2.
\end{aligned}
\end{equation*}
Now the source condition $K'(u^\dagger)^\ast w = u^\dagger$ implies that the source
representer $w$ is given by $w = \frac{u^\dagger}{ \epsilon (1-2u^\dagger)}$. Therefore,
the nonlinearity term $\langle w,E(u,u^\dagger)\rangle$ is given by
\begin{equation*}
  \langle w,E(u,u^\dagger)\rangle=\frac{-u^\dagger}{1-2u^\dagger}(u-u^\dagger)^2
\end{equation*}
which is smaller than zero for a fixed but sufficiently small $g^\dagger>0$. Moreover,
the prefactor $\left|\frac{u^\dagger}{1-2u^\dagger}\right|$ can be made arbitrarily
large, thereby indicating that the smallness assumption $(L\|w\|<1)$ can never be
satisfied then. Actually, the Lipschitz constant $L$ of $K'(u)$ is $L=2\epsilon$, and
$L|w|=\frac{2u^\dagger}{|1-2u^\dagger|}$, which can be arbitrarily large $($if
$u^\dagger$ is sufficiently close to $\frac{1}{2}$$)$, and thus the classical
nonlinearity condition \eqref{cond:clsnon} is violated. This shows that the proposed
nonlinearity condition \eqref{ncon} is indeed much weaker than the classical one.

A direct calculation shows in the second-order sufficient condition
\eqref{eqn:suffccond},
\begin{equation*}
  \langle K(u_\eta)-g^\dagger,E(u,u_\eta)\rangle = \eta\frac{u_\eta}{1-2u_\eta}(u-u_\eta)^2.
\end{equation*}
Observe that the form of $\frac{1}{\eta}\langle K(u_\eta)-g^\dagger,E(u,u_\eta)\rangle$
coincides with that of $-\langle w, E(u,u^\dagger)\rangle$. With this explicit
representation at hand, the nonnegativity of the term $\langle K(u_\eta)-
g^\dagger,E(u,u_\eta)\rangle$, and thus the second-order sufficient condition
\eqref{eqn:suffccond}, can be numerically verified for a given $g^\dagger$ and every
possible $\eta$ since the Tikhonov minimizer $u_\eta$ can be found by solving a cubic
equation.
\end{exam}

Next we consider an elliptic parameter identification problem to show that the smallness
assumption $L\|w\|<1$ of the classical nonlinearity condition \eqref{cond:clsnon} is
unnecessary by deriving an explicit representation of the nonlinearity term $\langle w,
E(u,u^\dagger)\rangle$. The derivations also illustrate clearly structural properties
developed in the abstract framework in Section \ref{subsec:bilin}.
\begin{exam}[Robin inverse problem]\label{exam:robin}
Let $\Omega\subset\mathbb{R}^2$ be an open bounded domain with a smooth boundary
$\Gamma$, which consists of two disjoint parts $\Gamma_i$ and $\Gamma_c$. We consider the
following elliptic equation
\begin{equation}\label{bdry}
  \begin{aligned}
   \left\{\begin{array}{ll}
       -\Delta y=0 & \mbox{ in } \Omega,\\
       \frac{\partial y}{\partial n}=f & \mbox{ on }\Gamma_c,\\
       \frac{\partial y}{\partial n}+u y=h& \mbox{ on }\Gamma_i.
   \end{array}\right.
  \end{aligned}
\end{equation}
We measure $g=y$ on $\Gamma_c$ and are interested in recovering the Robin coefficient
$u\in\mathcal{C}=\{u:u\geq c \}$ for some $c>0$ by means of Tikhonov regularization
\begin{equation*}
  \min_{u\in\mathcal{C}} \int_{\Gamma_c}|y-g^\delta|^2ds+\eta \int_{\Gamma_i} |u|^2ds.
\end{equation*}
It arises in corrosion detection and analysis of quenching process
\cite{Inglese:1997,JinZou:2010b}. Let $y(u)\in H^1(\Omega)$ be the solution to
\eqref{bdry}, and $\gamma_{\Gamma_c}$ be the trace operator to the boundary $\Gamma_c$,
similarly $\gamma_{\Gamma_i}$. Then direct computation shows $K(u)=\gamma_{\Gamma_c}
y(u)$. It is easy to show that the forward operator $K:L^2(\Gamma_i)\mapsto
L^2(\Gamma_c)$ is Fr\'{e}chet differentiable and the derivative is Lipschitz continuous.
Moreover, straightforward computations give
\begin{equation*}
  \begin{aligned}
     K'(u^\dagger)\delta u & =\gamma_{\Gamma_c}\tilde{z}(u^\dagger),\\
     E(u,u^\dagger)&=\gamma_{\Gamma_c}v(u,u^\dagger),\\
     K^\prime(u^\dagger)^*w&=-\gamma_{\Gamma_i}(y(u^\dagger)z(u^\dagger)),
  \end{aligned}
\end{equation*}
where the functions $\tilde{z}(u^\dagger)$, $v(u,u^\dagger)$ and $z(u^\dagger)\in
H^1(\Omega)$ satisfy
\begin{equation*}
  \begin{aligned}
  \int_\Omega \nabla \tilde{z}\cdot\nabla \tilde{v}dx +\int_{\Gamma_i}u^\dagger\tilde{z} \tilde{v}ds &= -\int_{\Gamma_i}
     \delta u y(u)\tilde{v}ds\quad\forall \tilde{v}\in H^1(\Omega),\\
  \int_\Omega \nabla v\cdot\nabla \tilde{v}dx +\int_{\Gamma_i}u^\dagger v \tilde{v}ds &= -\int_{\Gamma_i}
     (u-u^\dagger)(y(u)-y(u^\dagger))\tilde{v}ds\quad\forall \tilde{v}\in H^1(\Omega),\\
  \int_\Omega \nabla z\cdot\nabla \tilde{v}dx +\int_{\Gamma_i}u^\dagger z \tilde{v}ds &= \int_{\Gamma_c}
     w\tilde{v}ds\quad \forall\tilde{v}\in H^1(\Omega).
  \end{aligned}
\end{equation*}
Assume that the source condition \eqref{ncon} holds with the representer $w\in
L^2(\Gamma_c)$. Then by setting $\tilde{v}=z(u^\dagger)$ and $\tilde{v}=v(u,u^\dagger)$
respectively in the their weak formulations, it follows that
\begin{equation*}
   \langle w,E(u,u^\dagger)\rangle_{L^2(\Gamma_c)} = \left\langle u^\dagger-\mu^\dagger,
   (u-u^\dagger)\frac{y(u)-y(u^\dagger)}{y(u^\dagger)}\right\rangle_{L^2(\Gamma_i)}.
\end{equation*}
Hence, the term $\langle w,E(u,u^\dagger)\rangle$ exhibits the desired structure, cf.
\eqref{eqn:wrepres}. Next, by maximum principle, the solution $y(u)$ to \eqref{bdry} is
positive for positive $f$ and $h$. Moreover there holds
\begin{equation*}
   \int_\Omega |\nabla (y(u)-y(u^\dagger))|^2dx+\int_{\Gamma_i}u |y(u)-y(u^\dagger)|^2ds
   +\int_{\Gamma_i}(u-u^\dagger)y(u^\dagger)(y(u)-y(u^\dagger))ds=0.
\end{equation*}
Note also the monotonicity relation, i.e., if $u\ge u^\dagger$, then $y(u)\leq
y(u^\dagger)$. It follows from the above two relations that if
$u^\dagger-\mu^\dagger\ge0$, then
\begin{equation*}
    -\langle w,E(u,u^\dagger)\rangle\ge 0.
\end{equation*}
This shows that the nonlinearity condition \eqref{ncon} holds without resorting to the
smallness condition $L\|w\|<1$ in the classical nonlinearity condition
\eqref{cond:clsnon} under the designated circumstance.

Next we contrast the source condition \eqref{nscon} with the conventional one
\eqref{scon}. The operator $e(u,y)$ is bilinear, and $e_u(u,y(u))=\gamma_{\Gamma_i}y(u)$,
$e_u(u,y(u))^\ast\rho=\gamma_{\Gamma_i}(\rho y(u))$. Hence the new source condition
\eqref{nscon} requires the existence of some element $\rho\in H^1(\Omega)$ such that
\begin{equation*}
   \gamma_{\Gamma_i}(\rho y(u^\dagger)) = u^\dagger-\mu^\dagger.
\end{equation*}
This admits an easy interpretation: for $\gamma_{\Gamma_i}\rho$ to be fully determined,
$\gamma_{\Gamma_i}y(u^\dagger)$ cannot vanish, which is exactly the identifiability
condition (via Newton's law for convective heat transfer) \cite{JinZou:2010b}. The
representers $\rho$ and $w$ are related by $($in weak form$)$
\begin{equation*}
  \int_\Omega\nabla \rho\cdot\nabla \tilde{v}dx + \int_{\Gamma_i}u^\dagger\rho\tilde{v}ds = -\int_{\Gamma_c}w\tilde{v}ds
  \quad \forall \tilde{v}\in H^1(\Omega).
\end{equation*}
This relation shows clearly the different regularity assumptions on $\rho$ and $w$: the
existence of $\rho\in H^1(\Omega)$ does not actually guarantee the existence of $w\in
L^2(\Gamma_c)$. To ensure the existence of $w\in L^2(\Gamma_c)$, one necessarily needs
higher regularity on $\rho$ than $H^1(\Omega)$, presumably $\rho\in
H^\frac{3}{2}(\Omega)$. Conversely, the existence of $w\in L^2(\Gamma_c)$ automatically
ensures the existence of $\rho\in H^1(\Omega)$.
\end{exam}

Next we give an example of inverse medium scattering to show the same structure of the
nonlinearity term $\langle w,E(u,u^\dagger)\rangle$ but with less definitiveness.
\begin{exam}[Inverse scattering problem] Here we consider the two-dimensional time-harmonic inverse
scattering problem of determining the index of refraction $n^2$ from near-field scattered
field data, given one incident field $y^i$ \cite{ColtonKress:1998}. Let $y=y(x)$ denote
the transverse mode wave and satisfy
\begin{equation*}
   \Delta y + n^2k^2 y = 0.
\end{equation*}
Let the incident plane wave be $y^{i}=e^{kx\cdot d}$ with $d=(d_1,d_2)\in\mathbb{S}^1$
being the incident direction. Then for the complex coefficient $u=(n^2-1)k^2$ with its
support within $\Omega\subset\mathbb{R}^2$, the total field $y=y^{tot}$ satisfies
\begin{equation*}
   y= y^i + \int_\Omega G(x,z)u(z)y(z)dz,
\end{equation*}
where $G(x,z)$ is the free space fundamental solution, i.e., $G(x,z)=\frac{i}{4}
H_0^1(k|x-z|)$, the Hankel function of the first kind and zeroth order. The inverse
problem is to determine the refraction coefficient $u$ from the scattered field
\begin{equation*}
  y^s(x) = \int_\Omega G(x,z)u(z)y(z)dz
\end{equation*}
measured on a near-field boundary $\Gamma$. Consequently, we have $K(u)=\gamma_\Gamma
y^s(x)\in L^2(\Gamma)$. The Tikhonov approach for recovering $u$ takes the form
\begin{equation*}
   \min_{u\in \mathcal{C}} \int_{\Gamma}|K(u)-g^\delta|^2ds +\eta\int_\Omega|u|^2dx.
\end{equation*}
Here $g^\delta$ denotes the measured scattered field, and the constraint set
$\mathcal{C}$ is taken to be $\mathcal{C}=\{u\in L^\infty(\Omega): \Re(u)\geq 0,
\mathrm{supp}(u)\subset\subset\Omega\}$. It can be shown that the forward operator
$K:L^2(\Omega)\mapsto L^2(\Gamma)$ is Fr\'{e}chet differentiable, and the derivative is
Lipschitz continuous on $\mathcal{C}$. Now let $G^\dagger(x,z)$ be the fundamental
solution to the elliptic operator $\Delta + k^2 + u^\dagger$. Then we can deduce
\begin{equation*}
   \begin{aligned}
      K'(u^\dagger)\delta u  &= -\int_\Omega G^\dagger(x,z)\delta u y(u^\dagger)dz\quad x\in\Gamma,\\
      E(u,u^\dagger) & = -\int_\Omega G^\dagger(x,z)(u-u^\dagger)(y(u)-y(u^\dagger))dz\quad x\in\Gamma,\\
      K'(u^\dagger)^\ast w & = -\overline{y(u^\dagger)}\int_\Gamma \overline{G^\dagger(x,z)}w(x)dx,
   \end{aligned}
\end{equation*}
where $\bar{\quad}$ refers to taking complex conjugate. Then, by the source condition
$K'(u^\dagger)^\ast w=u^\dagger-\mu^\dagger$, we get
\begin{equation*}
   \begin{aligned}
         \langle w, E(u,u^\dagger)\rangle_{L^2(\Gamma)}
              &=-\int_\Gamma w(x) \overline{\int_\Omega G^\dagger(x,z)(u(z)-u^\dagger(z))(y(u)(z)-y(u^\dagger)(z))dz}dx\\
              &=-\int_\Omega \overline{y(u^\dagger)}\int_\Gamma\overline{G^\dagger(x,z)}w(x)dx(\overline{u(z)}-\overline{u^\dagger(z)})
              \frac{\overline{y(u)(z)}-\overline{y(u^\dagger)(z)}}{\overline{y(u^\dagger)(z)}}dz\\
              &=\left\langle u^\dagger-\mu^\dagger,(u-u^\dagger)\frac{y(u)-
               y(u^\dagger)}{y(u^\dagger)}\right\rangle_{L^2(\Omega)}.
   \end{aligned}
\end{equation*}
Note that the structure of $\langle w,E(u,u^\dagger) \rangle_{L^2(\Gamma)}$ coincides
with that in Example \ref{exam:robin}, which further corroborates the theory for bilinear
problems in Section \ref{subsec:bilin}. However, an analogous argument for definitive
sign is missing since the maximum principle does not hold for the Helmholtz equation.
Nonetheless, one might still expect some norm estimate of the form \eqref{cond:clsnon2},
which remains open. In particular, then a small $u^\dagger-\mu^\dagger$ would imply the
nonlinearity condition \eqref{ncon}.
\end{exam}

The last example shows the use of the source condition \eqref{nscon} and nonlinearity
condition \eqref{nncon}.
\begin{exam}[Inverse conductivity problem]\label{exam:eit}
Let $\Omega\subset\mathbb{R}^2$ be an open bounded domain with a smooth boundary
$\Gamma$. We consider the following elliptic equation
\begin{equation*}
  \begin{aligned}
   \left\{\begin{array}{ll}
       -\nabla \cdot(u\nabla  y)=f & \mbox{ in } \Omega,\\
       y = 0 & \mbox{ on }\Gamma.
   \end{array}\right.
  \end{aligned}
\end{equation*}
Let $y(u)\in H_0^1(\Omega)$ be the solution. We measure $y$ $($denoted by $g^\delta\in
H_0^1(\Omega)$$)$ in the domain $\Omega$ with
$\|\nabla(g^\delta-y(u^\dagger))\|_{L^2(\Omega)}\leq\delta$, i.e., $K(u)=y(u)$, and are
interested in recovering the conductivity $u\in\mathcal{C}=\{u\in H^1(\Omega): c_0\leq
u\leq c_1 \}$ for some finite $c_0,c_1>0$ by means of Tikhonov regularization
\begin{equation*}
  \min_{u\in\mathcal{C}} \int_{\Omega}|\nabla(K(u)-g^\delta)|^2dx+\eta \int_\Omega|u|^2+|\nabla u|^2dx.
\end{equation*}
It arises in estimating permeability of underground flow and thermal conductivity in heat
transfer \cite{Yeh:1986}. It follows from Meyers' theorem \cite{Meyers:63} that the
operator $K:H^1(\Omega)\mapsto H_0^1(\Omega)$ is Fr\'{e}chet differentiable and the
derivative is Lipschitz continuous. The operator $A(u)$ is given by
$-\nabla\cdot(u\nabla\cdot)$. It is easy to see that
\begin{equation*}
   \begin{aligned}
      A'(u)\delta u y(u) &= -\nabla \cdot(\delta u\nabla y(u)),\\
      e_u(u,y(u))^\ast\rho  & = \nabla y(u)\cdot\nabla\rho.
   \end{aligned}
\end{equation*}
Consequently, the source condition \eqref{nscon} reads: there exists some $\rho\in
H_0^1(\Omega)$ such that
\begin{equation*}
  \nabla y(u^\dagger)\cdot\nabla \rho = (I-\Delta)u^\dagger-\mu^\dagger,
\end{equation*}
which amounts to the solvability condition $\nabla y(u^\dagger)\neq 0$ $($cf., e.g.,
\cite{Richter:1981,ItoKunisch:1994}$)$. The nonlinearity condition \eqref{nncon} is given
by
\begin{equation*}
  \begin{aligned}
     \tfrac{c_r}{2}\|\nabla(y(u)-y(u^\dagger))\|_{L^2(\Omega)}^2
     &-\langle u^\dagger\nabla \rho, \nabla E(u,u^\dagger)\rangle\\
     &+\tfrac{1}{2}\|u-u^\dagger\|_{H^1(\Omega)}^2 +\langle \mu^\dagger,u-u^\dagger\rangle
     \geq\tfrac{\epsilon}{2}\|u-u^\dagger\|_{H^1(\Omega)}^2\quad \forall u\in\mathcal{C},
  \end{aligned}
\end{equation*}
where $E(u,u^\dagger)=K(u)-K(u^\dagger)-K'(u^\dagger) (u-u^\dagger)$ is the second-order
error. By setting $\tilde{v}=\rho$ in the weak formulation of $E(u,u^\dagger)$, i.e.,
\begin{equation*}
  \int u^\dagger\nabla E(u,u^\dagger)\cdot\nabla \tilde{v} dx
  = -\int_\Omega(u-u^\dagger)\nabla(y(u)-y(u^\dagger))\cdot \nabla \tilde{v}dx\quad \forall \tilde{v}\in H_0^1(\Omega).
\end{equation*}
and applying the generalized H\"{o}lder's inequality and Sobolev embedding theorem, we
get
\begin{equation*}
   \begin{aligned}
      |\langle u^\dagger\nabla \rho,\nabla E(u,u^\dagger)\rangle| &\leq
      \|\nabla(y(u)-y(u^\dagger))\|_{L^2(\Omega)}\|u-u^\dagger\|_{L^q(\Omega)}\|\nabla \rho\|_{L^p(\Omega)}\\
      &\leq C\|\nabla\rho\|_{L^p(\Omega)}\|\nabla (y(u)-(y^\dagger))\|_{L^2(\Omega)}\|u-u^\dagger\|_{H^1(\Omega)},
   \end{aligned}
\end{equation*}
where the exponents $p,q>2$ satisfy $\frac{1}{p}+\frac{1}{q}=\frac{1}{2}$ $($the exponent
$p$ can be any number greater than $2$$)$. Therefore, we have established condition
\eqref{cond:clsnon2} for the inverse conductivity problem, and the nonlinearity condition
\eqref{nncon} holds provided that the source representer $\rho\in W_0^{1,p}(\Omega)$ for
some $p>2$. We especially note that the smallness of the representer $\rho$ is not
required for the nonlinearity condition \eqref{nncon} for this example. The convergence
theory in Section 3 implies a convergence rate $\|u_\eta^\delta-u^\dagger\|_{H^1(\Omega)}
\leq C\sqrt{\delta}$ for the Tikhonov model with the a priori choice $\eta\sim\delta$ and
the discrepancy principle.

Note that the classical source condition \eqref{scon} reads: there exists some $w\in
H_0^1(\Omega)$ such that
\begin{equation*}
  K'(u^\dagger)^*w = (I-\Delta) u^\dagger - \mu^\dagger,
\end{equation*}
or equivalently in the weak formulation
\begin{equation*}
  \langle \nabla K'(u^\dagger) h, \nabla w\rangle = \langle u^\dagger, h\rangle_{H^1(\Omega)}
  - \langle\mu^\dagger,h\rangle\quad \forall h\in H^1(\Omega).
\end{equation*}
This source condition is difficult to interpret due to the lack of an explicit
characterization of the range of the adjoint operator $K'(u^\dagger)^*$, as often is the
case of parameter identifications \cite{EnglKunischNeubauer:1989,EnglHankNeubauer:1996}.
Also the weak formulation of $K'(u^\dagger)h\in H_0^1(\Omega)$, i.e.,
\begin{equation*}
   \langle u^\dagger\nabla K'(u^\dagger)h,\nabla v\rangle = \langle h\nabla y(u^\dagger),\nabla v\rangle
   \quad \forall v\in H_0^1(\Omega),
\end{equation*}
does not directly help due to subtle differences in the relevant bilinear forms.
Nonetheless, the representers $\rho$ and $w$ are closely related by
\begin{equation*}
  \rho = (A(u^\dagger))^{-1}(-\Delta) w.
\end{equation*}
This relation indicates that the operator $(A(u^\dagger))^{-1}(-\Delta)$ renormalizes the
standard inner product $\langle\nabla\cdot,\nabla\cdot\rangle$ on $H_0^1(\Omega)$ to a
problem-adapted weighted inner product $\langle u^\dagger\nabla\cdot,
\nabla\cdot\rangle$, and thus facilitates the interpretation of the resulting source
condition. This shows clearly the advantage of the source condition \eqref{nscon}.
\end{exam}

\section{Concluding remarks}
In the paper, we have presented a new approach to constrained nonlinear Tikhonov
regularization on the basis of a second-order sufficient condition, which was suggested
as an alternative nonlinearity condition. The proposed approach allows deriving
convergence rates for several a priori and a posteriori parameter choice rules, including
discrepancy principle, balancing principle and Hanke-Raus rule, and thus it is useful in
analyzing Tikhonov models. The structures of the source condition and nonlinearity
condition were discussed for a general class of nonlinear parameter identification
problems, especially more transparent source and nonlinearity conditions were derived. It
was found that for bilinear problems, the source representer $w$ in the crucial
nonlinearity term $\langle w,E(u,u^\dagger)\rangle$ actually does not appear. The theory
was illustrated in detail on several concrete examples, including three exemplary
parameter identification problems for elliptic differential equations. It was shown that
the proposed nonlinearity condition can be much weaker than the classical one, and the
crucial term $\langle w,E(u,u^\dagger)\rangle$ can admit nice structures that are useful
for deriving error estimates.

\section*{Acknowledgements}
The authors are grateful to two anonymous referees whose constructive comments have led
to an improved presentation. The work of Bangti Jin is supported by Award No.
KUS-C1-016-04, made by King Abdullah University of Science and Technology (KAUST). A part
of the work was carried out during his visit at Graduate School of Mathematical Sciences,
The University of Tokyo, and he would like to thank Professor Masahiro Yamamoto for the
kind invitation and hospitality.

\bibliographystyle{abbrv}
\bibliography{tikh}

\end{document}